\newtheorem{thm}{Theorem}[section]
\newtheorem{prop}[thm]{Proposition}
\newtheorem{lemma}[thm]{Lemma}
\newtheorem{cor}[thm]{Corollary}
\newtheorem{remark}[thm]{Remark}
\newtheorem{defn}[thm]{Definition}
\numberwithin{equation}{section}
\def\imod#1{\allowbreak\mkern5mu{\operator@font mod}\,\,#1}
\def\makeop#1{\expandafter\def\csname#1\endcsname
  {\mathop{\rm #1}\nolimits}\ignorespaces}
\newcommand{\Addresses}{{
  \bigskip
  \footnotesize

\textsc{Department of Mathematics, Johns Hopkins University}\par\nopagebreak
  \textit{E-mail address} : \texttt{xwang151@math.jhu.edu}

}}
\begin{document}
\author{Xiyuan Wang}

\title{Weight elimination in two dimensions when $p=2$}

\date{}

\keywords{Serre weights, Kisin modules}

\maketitle

\begin{abstract}
We prove the `weight elimination' part of the weight part of Serre's conjecture for mod 2 Galois representations for rank two unitary groups, by modifying the results in \cite{GLS14} and \cite{GLS15}.
\end{abstract}

\section{Introduction}
Let $p$ be a prime number and $F$ be a number field with ring of integers $\mathcal{O}_F$. We let $G_{F}$ denote the absolute Galois group of $F$. Assume that $\overline{r}:G_{F}\rightarrow \text{GL}_2(\overline{\mathbb{F}}_p)$ is an irreducible and modular Galois representation.
When $F$ is unramified at $p$, a conjecture of Buzzard-Diamond-Jarvis (see \cite{BDJ10}), proved by \cite{GLS14, GK14, New13, BLGG13} when $p>2$, predicts the set of irreducible representations $a$ of $\text{GL}_2(\mathcal{O}_{F}/p )$ such that $\overline{r}$ is modular of weight $a$. We recall the shape of this conjecture. For each place $v\mid p$, let $F_{v}$ be the completetion of $F$ at $v$ and denote the residue field of $F_{v}$ by $k_{v}$. For each representation $\overline{\rho}:G_{F_{v}}\rightarrow \text{GL}_{2}(\overline{\mathbb{F}}_p)$, Buzzard-Diamond-Jarvis define a set $W^{\mathrm{explicit}}(\overline{\rho})$ of irreducible representations of $\text{GL}_2(k_v)$ (called Serre  weights), and conjecture that $\overline{r}$ is modular of weight $a$ if and only if $a\in W^{\mathrm{explicit}}(\overline{r}):=\otimes_{v\mid p} W^{\mathrm{explicit}}(\overline{r}|_{G_{F_{v}}})$

The papers \cite{GLS14, GLS15} in particular prove the `weight elimination' direction of this conjecture (as well as its natural generalization to the case where $F$ is ramified at $p$), i.e., that if $\overline{r}$ is modular of weight $a$ (in the sense of \cite[Def. 2.1.9]{BLGG13}, i.e., arising from a suitable automorphic form on a rank two unitary group), then $a\in W^{\mathrm{explicit}}(\overline{r})$. They achieve this goal by proving a purely local result, namely that for another set of Serre weights $W^{\mathrm{cris}}(\overline{\rho})$ (defined in terms of the Hodge-Tate weights of crystalline lifts of the local representation $\overline{\rho}$), one has $W^{\mathrm{cris}}(\overline{\rho})\subseteq W^{\mathrm{explicit}}(\overline{\rho})$. 

The main reason they require $p>2$ is that they use a field extension lemma (see \cite[Lem. 5.1.2]{Liu08}) which relies on this condition. In this note, we remove the condition $p>2$ by proving Lemma 2.1 below. This note is closely related to the papers \cite{GLS14} and \cite{GLS15}. We follow exactly the same notations and ideas in these two papers. One should treat this note as a complement of \cite{GLS14, GLS15} when $p=2$. The main result of this note is as follows.
\begin{thm}[Theorem 6.2]
Let $F$ be an imaginary CM field with maximal totally real subfield $F^{+}$. Assume that $F/F^{+}$ is unramified at all finite places, that every place of $F^{+}$ above $2$ splits in $F$, and that $[F^{+}:\mathbb{Q}]$ is even. Suppose that $\overline{r}:G_{F}\rightarrow \GL_{2}(\overline{\mathbb{F}}_{2})$ is an irreducible representation with split ramification that is modular in the sense of \cite[Def. 2.1.9]{BLGG13}. 

Let $a$ be a Serre weight. If $\overline{r}$ is modular of weight $a$, then $a\in W^{\mathrm{explicit}}(\overline{r})$.  
\end{thm}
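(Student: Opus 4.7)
The plan is to apply the strategy of \cite{GLS14,GLS15} verbatim in the $p=2$ setting, with the one input from those papers that forces $p>2$, namely Liu's field extension lemma \cite[Lem. 5.1.2]{Liu08}, replaced by Lemma 2.1 of this paper. First I would recall that the global-to-local reduction in \cite{GLS14,GLS15} (combining modularity of weight $a$ with potential automorphy and the crystalline lifting theorems of \cite{BLGG13}) is formal in the prime $p$: it extracts, from the hypothesis that $\overline{r}$ is modular of weight $a$, a crystalline lift of each $\overline{\rho} := \overline{r}|_{G_{F_v}}$ whose Hodge--Tate weights are determined by $a$. Consequently, under the running hypotheses on $F/F^{+}$, the theorem reduces to the purely local containment
\[
W^{\mathrm{cris}}(\overline{\rho}) \subseteq W^{\mathrm{explicit}}(\overline{\rho})
\]
for every place $v \mid 2$ of $F^{+}$.

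For the local step I would reproduce the analysis of crystalline lifts via Breuil--Kisin modules carried out in \cite{GLS14,GLS15}. The argument attaches to a crystalline lift of $\overline{\rho}$ a Kisin module of prescribed height, produces after a (possibly ramified) base change an explicit normal form for the Frobenius matrix, and from that normal form reads off both the Hodge--Tate weights and the tame inertial type of the reduction, which are then matched against the combinatorial description of $W^{\mathrm{explicit}}(\overline{\rho})$. At every step where \cite{GLS14,GLS15} need to pass from a larger field back to the base field $K = F_{v}$ in order to compare Kisin modules and their Frobenius matrices modulo $2$, the mechanism is Liu's lemma; substituting the statement of Lemma 2.1 in its place makes the extension/descent step valid at $p = 2$.

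The main obstacle, and the reason the note requires some work beyond citing \cite{GLS14,GLS15}, is auditing those papers to verify that Liu's lemma is indeed the \emph{only} place where the assumption $p>2$ is essential. Several intermediate computations involve $2$-adic valuations, conjugacy of shapes of Frobenius matrices, and analysis of the Galois action on the Kummer tower used to attach Kisin modules; each such step must be rechecked to ensure that no hidden division by $2$ or appeal to $p$ being odd is involved. Where the original argument does implicitly rely on $p$ being odd in an incidental way, the corresponding normal form or valuation estimate should be rewritten in a form uniform in $p$, with the new lemma supplying the required field extension input. Once this audit is complete, the local inclusion holds at $p=2$ and, combined with the unchanged global reduction, yields the theorem.
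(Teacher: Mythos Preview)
Your proposal is correct and follows the same strategy as the paper: reduce to the local inclusion $W^{\mathrm{cris}}(\overline{\rho})\subseteq W^{\mathrm{explicit}}(\overline{\rho})$ via \cite{BLGG13}, then supply that inclusion by auditing \cite{GLS14,GLS15} with Lemma~2.1 replacing \cite[Lem.~5.1.2]{Liu08} (and patching the residual $2$-adic estimates, as the paper does in Proposition~4.1). Two small corrections to your description of the global step: first, the reduction does not use potential automorphy or crystalline lifting theorems---those are tools for the converse direction---but rather local-global compatibility, which produces the required crystalline lift from the automorphic form witnessing modularity of weight $a$; second, this reduction is not quite ``formal in $p$'', since \cite{BLGG13} carries a standing hypothesis $p>2$, and the paper explicitly invokes \cite{Tho17} to justify that the relevant argument (\cite[Lem.~2.1.11]{BLGG13}) goes through when $p=2$.
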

We now explain the structure of this note. In Section 2, we prove the main lemma of this note. In Section 3, we recall some sets of Serre weights. They are important for the formulation and the proof of the weight part of Serre's conjecture. In Section 3 and 4, we revise \cite{GLS14, GLS15} to prove the main local result of this paper. One should read these two sections together with \cite{GLS14} and \cite{GLS15}. In the last section, we prove our main global result.

I would like to thank Hui Gao for helpful discussions. The results in this note were proved in 2015 as part of the author's Ph.D. project. Now I decide to publish this note since the results in this note has already been used in several papers (see \cite{DS17}, \cite{CEGM17}, \cite{EG19}, \cite{Bar20}, and \cite{Ste20}).

The converse to weight elimination, namely that $\overline{r}$ is modular of each predicted weight, appears to be rather more difficult, because of limitations on $2$-adic automorphy lifting theorems. However, we will report on some results in this direction in a future paper.
\subsection{Notations}
Let $p$ be a prime number. Let $K$ be a finite extension of $\mathbb{Q}_p$, with ring of integers $\mathcal{O}_K$ and residue field $k$. Let $K_0$ be the maximal unramified extension of $\mathbb{Q}_p$ contained in $K$. Let $\pi$ be a uniformizer of $K$. Let $E(u)$ denote the minimal polynomial of $\pi$ over $K_0$ , and set $e=\deg E(u)$. Let $W(k)$ be the ring of Witt vectors which is the ring of integers in $K_0$. We write $I_K$ for the inertia subgroup of the absolute Galois group $G_{K}$ of $K$. 

If $W$ is a de Rham representation of $G_{K}$ over $\overline{\mathbb{Q}}_p$ and $\kappa:K\hookrightarrow \overline{\mathbb{Q}}_p$ is an embedding, then by definition the multiset $\text{HT}_{\kappa}(W)$ of Hodge-Tate weights of $W$ with respect to $\kappa$ contains $i$ with multiplicity $\dim_{\overline{\mathbb{Q}}_p}(W\otimes_{\kappa , K}\hat{\overline{K}}(-i))^{G_K}$ with the usual notations for Tate twists. Thus for example $\text{HT}_{\kappa}(\varepsilon)=\{1\}$, where $\varepsilon$ is the $p$-adic cyclotomic character. We will refer to the elements of $\mathrm{HT}_{\kappa}(W)$  as the ``$\kappa$-labeled Hodge Tate weights''.

Define $\mathfrak{G}=W(k)[[u]]$. The ring $\mathfrak{G}$ is equipped with a Frobenius endomorphism $\varphi$ via $u\mapsto u^p$ along with the usual (arithmetic) Frobenius on $W(k)$. Let $S$ be the $p$-adic completion of the divided power envelope of $W(k)[u]$ with respect to the ideal generated by $E(u)$. Write $S_{K_0}=S[1/p]$.

Let $R=\varprojlim \mathcal{O}_{\overline{K}}/p$ where the transition maps are the $p$th power maps. $W(R)$ is the ring of Witt vectors of $R$. We have a unique surjective projection map $\theta: W(R)\rightarrow \hat{\mathcal{O}}_{\overline{K}}$ to the $p$-adic completion of $\mathcal{O}_{\overline{K}}$. We denote by $B_{\rm{dR}}^{+}$ the $\ker(\theta)$-completion of $W(R)[1/p]$. We define filtration on $W(R)$ by 
\[
\mathrm{Fil}^{i}W(R)=W(R)\cap (\ker(\theta))^{i}B_{\rm{dR}}^{+}.
\]
Let $A_{\text{cris}}$ be the $p$-adic completion of the divided power envelope of $W(R)$ with respect to $\ker(\theta)$. We write $B_{\mathrm{cris}}^{+}=A_{\mathrm{cris}}[1/p]$.

We fix a compatible system of $p^n$th roots of $\pi$. Set $\uline{\pi}=(\pi_n)_{n\geqslant 0}\in R$, $\uline{\epsilon}=(\zeta_{p^n})_{n\geqslant 0}\in R$, and $t=-\text{log}([\uline{\epsilon}])\in A_{\text{cris}} $. For any $g\in G_K$, write $\uline{\epsilon}(g)=g(\uline{\pi})/\uline{\pi}$. By \cite[Ex. 5.3.3]{Liu07} there exists an element $\mathfrak{t}\in W(R)$ such that $t=c\varphi (\mathfrak{t})$ with $c\in S^{\times}$. Following \cite[\S 5]{Fon94} we define
\[
I^{[m]}W(R)=\{x\in W(R): \varphi^{n}(x)\in \text{Fil}^{m}B_{\mathrm{cris}}^{+} \text{ for all } n>0\}.
\]
(See \cite[\S 5]{Fon94} for the definition of the filtration on $B_{\mathrm{cris}}^{+}$.) By \cite[Prop. 5.1.3]{Fon94} the ideal $I^{[m]}W(R)$ is principal, generated by $\varphi (\mathfrak{t})^{m}$.

We define 
\[
K_{\infty}=\bigcup_{n=0}^{\infty}K(\pi_n),\ K_{p^{\infty}}=\bigcup_{n=1}^{\infty}K(\zeta_{p^n}),\ \hat{K}=\bigcup_{n=1}^{\infty}K_{\infty}(\zeta_{p^n}).
\]
Write $G_{\infty}=\text{Gal}(\overline{K}/K_{\infty})$, $\hat{G}_{p^{\infty}}=\text{Gal}(\hat{K}/K_{p^{\infty}})$, $\hat{G}=\text{Gal}(\hat{K}/K)$, and $H_K=\text{Gal}(\hat{K}/K_{\infty})$.

\section{The main lemma}
In this section, we let $p=2$. We have the following main lemma of this note.
\begin{lemma}
There is a uniformizer $\varpi$ of $K$, such that $K_{2^{\infty}}\cap K_{\infty}=K$, where $K_{\infty}=\bigcup_{n=1}^{\infty}K(\sqrt[2^n]{\varpi})$. 
\end{lemma}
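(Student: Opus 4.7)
Write $L:=K_{2^\infty}$. The plan is to reduce the claim to finding a uniformizer $\varpi$ of $K$ with $\varpi\notin(L^\times)^2$, and then to produce such a $\varpi$ using that $L/K$ is residually trivial. The main technical point is this residual triviality; everything else is routine Kummer and Galois theory.

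For the reduction, set $K_n:=K(\sqrt[2^n]{\varpi})$; since $x^{2^n}-\varpi$ is Eisenstein, $[K_n:K]=2^n$. If $\varpi\notin(L^\times)^2$ then $\sqrt[2^k]{\varpi}\notin L$ for every $k\geq 1$, since otherwise $\alpha:=\sqrt[2^k]{\varpi}\in L$ would give $\sqrt{\varpi}=\alpha^{2^{k-1}}\in L$. Writing $L_m:=K(\zeta_{2^m})$, the Kummer extension $L_m(\sqrt[2^n]{\varpi})/L_m$ therefore has degree $2^n$ for $m\geq n$, and a counting argument (using $[L_m\cdot K_n:K_n]\leq [L_m:L_m\cap K_n]$ together with $[L_m\cdot K_n:L_m]=2^n=[K_n:K]$) forces $L_m\cap K_n=K$. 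Taking unions over $m$ and $n$ yields $K_\infty\cap L=K$.

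To construct such a $\varpi$, I would first show that $L$ has the same residue field as $K$. The polynomial $\Phi_{2^n}(x+1)$ is Eisenstein over $\Q_2$, hence also over the maximal unramified extension $K^{\mathrm{unr}}$ (where $2$ is still a uniformizer); so $K^{\mathrm{unr}}(\zeta_{2^n})/K^{\mathrm{unr}}$ is totally ramified of degree $2^{n-1}$. Using the identity $[K(\zeta_{2^n})\cdot K^{\mathrm{unr}}:K^{\mathrm{unr}}]=[K(\zeta_{2^n}):K(\zeta_{2^n})\cap K^{\mathrm{unr}}]$ together with $[K(\zeta_{2^n}):K]\leq 2^{n-1}$, we force $K(\zeta_{2^n})\cap K^{\mathrm{unr}}=K$, i.e., $K(\zeta_{2^n})/K$ is totally ramified; passing to the union, $L/K$ is residually trivial. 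Hence the unramified quadratic extension $K'/K$ is not contained in $L$. Writing $K'=K(\sqrt{u})$ for some unit $u\in\O_K^\times$ (possible since $\mathrm{char}\,K=0$, after adjusting by squares to reduce to valuation zero), we conclude $u\notin(L^\times)^2$. Finally, for any uniformizer $\pi$ of $K$, either $\pi\notin(L^\times)^2$ and we take $\varpi:=\pi$, or $\pi\in(L^\times)^2$ and $\varpi:=u\pi$ is a uniformizer which cannot be a square in $L$ (else $u=\varpi/\pi$ would be).
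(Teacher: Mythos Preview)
Your reduction step is correct and in fact gives a self-contained replacement for the paper's citation of \cite[Prop.~4.1.5]{Liu10}: once $\varpi\notin (L^\times)^2$ with $L=K_{2^\infty}$, the Kummer/degree count you outline really does force $L_m\cap K_n=K$ for all $m\ge n$, hence $L\cap K_\infty=K$. (The step ``therefore $[L_m(\sqrt[2^n]{\varpi}):L_m]=2^n$'' deserves one more line: if the order of $\varpi$ in $L_m^\times/(L_m^\times)^{2^n}$ were $2^s<2^n$, then $\varpi=\zeta\cdot\beta^{2^{n-s}}$ with $\zeta\in\mu_{2^s}$, and since $m\ge n>s$ every $2^s$-th root of unity is a square in $L_m$, contradicting $\varpi\notin(L_m^\times)^2$.)

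The construction step, however, has a genuine gap. The assertion that $2$ is a uniformizer of $K^{\mathrm{unr}}$ is false whenever $e(K/\Q_2)>1$, so the Eisenstein argument collapses, and more importantly the conclusion you draw from it is itself false: $K_{2^\infty}/K$ need \emph{not} be residually trivial. Take $K=\Q_2(\sqrt{10})$, which is totally ramified over $\Q_2$ with residue field $\F_2$. Then $K(\zeta_8)=\Q_2(\sqrt{10},\sqrt{2},i)$ contains $\sqrt{5}=\sqrt{10}/\sqrt{2}$, and $K(\sqrt{5})$ is the unramified quadratic extension of $K$ (since $\Q_2(\sqrt{5})=\Q_2(\sqrt{-3})$ is the unramified quadratic extension of $\Q_2$). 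Thus the unramified quadratic extension of $K$ \emph{is} contained in $K_{2^\infty}$, your unit $u$ (with $K(\sqrt{u})$ unramified) lies in $(L^\times)^2$, and the dichotomy ``$\varpi=\pi$ or $\varpi=u\pi$'' produces no uniformizer outside $(L^\times)^2$.

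The paper avoids this by a counting argument that makes no claim about ramification in $K_{2^\infty}/K$: since $\Gal(K_{2^\infty}/K)\subseteq\Z_2^\times\cong\Z/2\times\Z_2$, there are at most three quadratic extensions of $K$ inside $K_{2^\infty}$, whereas the quadratic extensions $K(\sqrt{\pi})$ as $\pi$ ranges over uniformizers number $|\O_K^\times/(\O_K^\times)^2|=2^{[K:\Q_2]+1}\ge 4$. So some $K(\sqrt{\varpi})$ lies outside $K_{2^\infty}$, and then your (correct) reduction, or the cited result of Liu, finishes the proof. To repair your argument you could replace the residual-triviality claim by this pigeonhole.
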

\begin{proof}
By \cite[Prop. 4.1.5]{Liu10}, for any uniformizer $\pi$ of $K$, we have $K_{2^{\infty}}\cap K_{\infty}=K$ or $K(\sqrt{\pi})$. So we just need to show that we can choose a uniformizer $\varpi$, such that $K(\sqrt{\varpi})\not\subseteq K_{2^{\infty}}$.

Assume that $[K:\mathbb{Q}_2]=d$. By Kummer theory, there are $2^{d+1}$ quadratic extensions of $K$ which are of the form $K(\sqrt{\pi})/K$, where $\pi$ is a uniformizer of $K$.

On the other hand, $\text{Gal}(K_{2^{\infty}}/K)$ is a subgroup of $\mathbb{Z}_{2}^{\times}\cong \mathbb{Z}/2\times \mathbb{Z}_2$. One can see that there are at most three quadratic extensions of $K$ inside $K_{2^{\infty}}$. Since $2^{d+1}\geq 4> 3$, we can find a uniformizer $\varpi$, such that $K(\sqrt{\varpi})\not\subseteq K_{2^{\infty}}$. We are done.
\end{proof}
Note that, when $p>2$, one has $K_{p^{\infty}}\cap K_{\infty}=K$ for any choice of uniformizer $\pi$.

As a corollary of the above lemma, we have $\hat{G}\simeq \hat{G}_{2^{\infty}}\rtimes H_{K}$.
To complete the proof of weight elimination when $p=2$, it suffices to make several modifications of some results in \cite{GLS14} and \cite{GLS15}.

\section{Serre weights}
In this section, we recall some sets of Serre weights.
Note that $p$ can be any prime number in this section. $K$ is a finite extension of $\mathbb{Q} _p$ with ring of integers $\mathcal{O}_{K}$ and residue field $k$. We set $f=[k: \mathbb{F}_p]$.

\begin{defn}
A Serre weight is an isomorphism class of irreducible $\overline{\mathbb{F}}_p$-representation of $\GL_{2}(k)$.
\end{defn}
We know that any such representation, up to isomorphism, is of the form
\[
F_a:=(\otimes_{\sigma: k\hookrightarrow \overline{\mathbb{F}}_p} \text{det}^{a_{\sigma ,2}}\otimes \Sym^{a_{\sigma , 1}- a_{\sigma , 2}}k^{2})\otimes_{\sigma , k}\overline{\mathbb{F}}_p ,
\]
where $0\leq a_{\sigma , 1}-a_{\sigma ,2}\leq p-1$ for each $\sigma$. We have $F_a\cong F_b$ as representations of $\GL_{2}(k)$ if and only if $a_{\sigma ,1}-a_{\sigma ,2}=b_{\sigma ,1}-b_{\sigma ,2}$ for each $\sigma$, and the character $k^{\times}\rightarrow \overline{\mathbb{F}}_{p}^{\times}$, defined by $x\mapsto \prod_{\sigma k:\hookrightarrow \overline{\mathbb{F}}_p}\sigma (x)^{a_{\sigma ,2}-b_{\sigma ,2}}$, is trivial.

We write $\mathbb{Z}_{+}^{2}$ for the set of pairs of integers $(a_1, a_2)$ with $0\leq a_1 -a_2\leq p-1$ and say $a=(a_{\sigma ,1}, a_{\sigma ,2})_{\sigma}\in (\mathbb{Z}_{+}^{2})^{\Hom(k,\overline{\mathbb{F}}_p)}$ is a Serre weight. We identify the Serre weight $a\in (\mathbb{Z}_{+}^{2})^{\Hom(k, \overline{\mathbb{F}}_p)}$ with the Serre weight represented by $F_{a}$. We say that $\lambda\in (\mathbb{Z}_{+}^{2})^{\Hom_{\mathbb{Q}_p}(K, \overline{\mathbb{Q}}_p)}$ is a lift of $a\in (\mathbb{Z}_{+}^{2})^{\Hom(k,\overline{\mathbb{F}}_p)}$ if for each $\sigma\in \Hom(k,\overline{\mathbb{F}}_p)$ there is an element $\kappa\in \Hom_{\mathbb{Q}_p}(K, \overline{\mathbb{Q}}_p)$ lifting $\sigma$ such that $\lambda_{\kappa}=a_{\sigma}$, and for other $\kappa^{'}\in \Hom_{\mathbb{Q}_p}(K, \overline{\mathbb{Q}}_p)$ lifting $\sigma$ we have $\lambda_{\kappa^{'}}=0$. 

\begin{defn}
Let $K/ \mathbb{Q}_p$ be a finite extension, let $\lambda\in (\mathbb{Z}_{+}^{2})^{\Hom_{\mathbb{Q}_p}(K, \overline{\mathbb{Q}}_p)}$, and let $\rho:G_K\rightarrow \GL_2 (\overline{\mathbb{Q}}_p)$ be a de Rham representation. Then we say that $\rho$ has Hodge type $\lambda$ if for each $\kappa\in \Hom_{\mathbb{Q}_p}(K, \overline{\mathbb{Q}}_p)$ we have $\mathrm{HT}_{\kappa}(\rho)=\{\lambda_{\kappa ,1}+1, \lambda_{\kappa ,2}\}$. 
\end{defn}
Now we want to define several sets of Serre weights (see Section 4 of \cite{BLGG13}). The point is that, conjectually, they are the same. And it should be the set of predicted Serre weights.

We firstly introduce the predicted sets of Serre weights $W^{\mathrm{cris}}(\overline{\rho})$.
\begin{defn}
Let $\overline{\rho}: G_{K}\rightarrow \GL_2(\overline{\mathbb{F}}_p)$ be a continuous representation. Then we let $W^{\mathrm{cris}}(\overline{\rho})$ be the set of Serre weights $a\in (\mathbb{Z}_{+}^{2})^{\Hom(k, \overline{\mathbb{F}}_p)}$ with the property that there is a crystalline representation $\rho:G_K\rightarrow \GL_2(\overline{\mathbb{Q}}_p)$ lifting $\overline{\rho}$, such that $\rho$ has Hodge type $\lambda$ for some lift $\lambda\in (\mathbb{Z}_{+}^{2})^{\Hom_{\mathbb{Q}_p}(K, \overline{\mathbb{Q}}_p)}$ of $a$.
\end{defn}

Now we want to give a lower bound for the set $W^{\mathrm{cris}}(\overline{\rho})$. Let  $K^{'}$ be the quadratic unramified extension of $K$ inside $\overline{\mathbb{Q}}_p$ with residue field $k^{'}$. 

\begin{defn}
If $\overline{\rho}:G_{K}\rightarrow \GL_2(\overline{\mathbb{F}}_p)$ is irreducible, then we let $W^{\mathrm{explicit}}(\overline{\rho})$ be the set of Serre weights $a\in (\mathbb{Z}_{+}^{2})^{\Hom(k, \overline{\mathbb{F}}_p)}$, such that there is a subset $J\subset \Hom(k^{'}, \overline{\mathbb{F}}_p)$ contaning exactly one element extending each element of $\Hom(k,\overline{\mathbb{F}}_p)$, for each $\sigma\in \Hom (k, \overline{\mathbb{F}}_p)$ and integer $0\leq \delta_{\sigma}\leq e-1$, and, if $\Hom(k^{'}, \overline{\mathbb{F}}_p)=J\coprod J^{c}$, then 
\[
\overline{\rho}|_{I_{K}}\cong
\left(
\begin{array}{cc}
\prod_{\sigma\in J}\omega_{\sigma}^{a_{\sigma , 1}+1+\delta_{\sigma}}\prod_{\sigma \in J^{c}}\omega_{\sigma}^{a_{\sigma ,2}+e-1-\delta_{\sigma}} & 0\\
0 & \prod_{\sigma\in J^{c}}\omega_{\sigma}^{a_{\sigma ,1}+1+\delta_{\sigma}}\prod_{\sigma\in J}\omega^{a_{\sigma ,2}+e-1-\delta_{\sigma}}\\
\end{array}
\right). 
\]    

If $\overline{\rho}:G_K\rightarrow \GL_2(\overline{\mathbb{F}}_p)$ is reducible, then we let $W^{\mathrm{explicit}}(\overline{\rho})$ be the set of Serre weights $a\in (\mathbb{Z}_{+}^{2})^{\Hom(k, \overline{\mathbb{F}}_p)}$, such that $\overline{\rho}$ has a crystalline lift of the form 
\[
\left(
\begin{array}{cc}
\psi_1 & \ast \\
0 & \psi_2
\end{array}
\right)
\] 
which has Hodge type $\lambda$ for some lift $\lambda\in (\mathbb{Z}_{+}^{2})^{\Hom_{\mathbb{Q}_p}(K, \overline{\mathbb{Q}}_p)}$ of $a$. In particular, if $a\in W^{\mathrm{explicit}}(\overline{\rho})$, then it is necessarily the case that there is a decomposition $\Hom(k,\overline{\mathbb{F}}_p)=J\coprod J^c$ and for each $\sigma\in \Hom(k,\overline{\mathbb{F}}_p)$ there is an integer $0\leq \delta_{\sigma}\leq e-1$ such that
\[
\overline{\rho}|_{I_K}\cong 
\left(
\begin{array}{cc}
\prod_{\sigma\in J}\omega_{\sigma}^{a_{\sigma ,1}+1+\delta_{\sigma}}\prod_{\sigma\in J^{c}}\omega_{\sigma}^{a_{\sigma ,2}+\delta_{\sigma}} & \ast \\
0 & \prod_{\sigma\in J^{c}}\omega_{\sigma}^{a_{\sigma ,1}+e-\delta_{\sigma}}\prod_{\sigma\in J}\omega_{\sigma}^{a_{\sigma ,2}+e-1-\delta_{\sigma}}
\end{array}
\right).
\]
\end{defn}
If $K$ is a finite unramified extension of $\mathbb{Q}_p$, then $W^{\mathrm{explicit}}(\overline{\rho})$ is exactly the set $W^{\mathrm{BDJ}}(\overline{\rho})$ defined in Section 2 of \cite{GLS14}. By the above definitions and \cite[Lem. 4.1.19]{BLGG13}, we have 
\[
W^{\mathrm{explicit}}(\overline{\rho})\subseteq W^{\mathrm{cris}}(\overline{\rho}).
\]

We have the following main local result of \cite{GLS14, GLS15}.
\begin{thm}
Let $\overline{\rho}: G_K\rightarrow \GL_2(\overline{\mathbb{F}}_p)$ be a continuous representation.
If $p>2$, then
$W^{\mathrm{cris}}(\overline{\rho})\subseteq W^{\mathrm{explicit}}(\overline{\rho})$.
\end{thm}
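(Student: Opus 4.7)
The plan is to follow the strategy of \cite{GLS14} (irreducible $\overline{\rho}$) and \cite{GLS15} (reducible $\overline{\rho}$), which is an explicit classification of Kisin modules with descent data. Given $a\in W^{\mathrm{cris}}(\overline{\rho})$, pick a crystalline lift $\rho:G_K\to \GL_2(\overline{\mathbb{Q}}_p)$ of Hodge type $\lambda$ lifting $a$, and associate to $\rho$ its Breuil--Kisin module $\mathfrak{M}$ over $\mathfrak{G}=W(k)[[u]]$. This is a finite free $\mathfrak{G}$-module of rank $2$ equipped with a semilinear Frobenius whose cokernel on pullback is killed by a power of $E(u)$, with elementary divisors in each $\kappa$-component determined by $\lambda_\kappa$. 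The restriction $\rho|_{G_\infty}$ is recovered by inverting $u$ and passing to the associated étale $\varphi$-module.

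Next I would reduce modulo the maximal ideal of $\overline{\mathbb{Z}}_p$ to obtain a torsion Kisin module $\overline{\mathfrak{M}}$ over $k[[u]]$ attached to $\overline{\rho}|_{G_\infty}$. The main technical content is to put $\overline{\mathfrak{M}}$ into a normal form: choose a basis with respect to which the matrix of Frobenius is controlled by a decomposition $\Hom(k',\overline{\mathbb{F}}_p)=J\coprod J^c$ and integers $0\leq \delta_\sigma\leq e-1$. This classification translates the shape data directly into the inertial characters $\overline{\rho}|_{I_K}$ appearing in the definition of $W^{\mathrm{explicit}}(\overline{\rho})$; in the reducible case one must additionally exhibit an upper-triangular crystalline lift witnessing $a\in W^{\mathrm{explicit}}(\overline{\rho})$.

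To conclude, one must recover the full restriction $\overline{\rho}|_{I_K}$ from $\overline{\mathfrak{M}}$, not merely $\overline{\rho}|_{G_\infty}$. This is done by enhancing the $G_\infty$-action on the associated étale $\varphi$-module to a $\hat{G}$-action (a $(\varphi,\hat{G})$-module in the sense of Liu), which requires the decomposition $\hat{G}\simeq \hat{G}_{p^\infty}\rtimes H_K$; for $p>2$ this follows from \cite[Lem. 5.1.2]{Liu08}. The main obstacle is the Kisin module classification itself: showing that every $\overline{\mathfrak{M}}$ can be brought into a normal form compatible with the explicit parameters $J$, $\delta_\sigma$, and that these parameters correctly encode the fundamental characters on $I_K$. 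This requires careful analysis of the shape of Frobenius modulo higher powers of $u$ and matching against the action of $\hat{G}$, and constitutes the technical heart of \cite{GLS14, GLS15}. In our setting the same argument should go through once the field-extension input is replaced by Lemma 2.1.
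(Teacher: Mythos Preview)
Your outline is essentially the approach of \cite{GLS14, GLS15}, which is precisely what the paper cites for this result (the paper itself offers no proof, as the theorem is a summary of the main local result of those references). A few inaccuracies are worth correcting, however.

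First, the split between \cite{GLS14} and \cite{GLS15} is not by reducibility of $\overline{\rho}$: \cite{GLS14} treats the case where $K/\mathbb{Q}_p$ is unramified (handling both the reducible and irreducible cases, the latter deduced from the former by restriction to the unramified quadratic extension, cf.\ \cite[\S10]{GLS14}), while \cite{GLS15} extends the argument to arbitrary finite $K/\mathbb{Q}_p$. Second, there is no descent data in this picture; the Kisin modules are taken directly over $\mathfrak{G}=W(k)[[u]]$, and the key structure theorem (\cite[Thm.~4.22]{GLS14}, \cite[Thm.~2.4.1]{GLS15}) is established for the integral $\mathfrak{M}$ rather than for $\overline{\mathfrak{M}}$. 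The hypothesis $p>2$ enters mainly in the proof of that structure theorem, via the control of the monodromy operator $N$ (\cite[Prop.~4.7]{GLS14}), which in turn uses a generator $\tau$ of $\hat{G}_{p^\infty}$ furnished by the field-extension lemma \cite[Lem.~5.1.2]{Liu08}; the $(\varphi,\hat{G})$-module formalism you mention is then used in the reducible case to compare extension classes and produce the required upper-triangular crystalline lift (\cite[\S8--9]{GLS14}, \cite[\S5--6]{GLS15}).

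Finally, your closing sentence about replacing the field-extension input by Lemma~2.1 is misplaced: the statement here is for $p>2$, where \cite[Lem.~5.1.2]{Liu08} already applies, so no substitute is needed. That replacement is exactly the content of the \emph{subsequent} sections of the paper, which extend the theorem to $p=2$.
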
 
In the following two sections, we remove the condition $p>2$.

\section{Revision of \texorpdfstring{\cite{GLS14}}{} when \texorpdfstring{$p=2$}{}}
In this section, We revise the paper \cite{GLS14} for the case $p=2$. From now on, we fix the uniformizer $\pi=\varpi$ in Lemma 2.1 and let $p=2$. With this choice of uniformizer, the conclusions of \cite[Lem. 5.1.2]{Liu08} still hold when $p=2$. We have $\hat{G}\simeq \hat{G}_{2^{\infty}}\rtimes H_{K}$. So we can choose (and fix) a topological generator $\tau\in \hat{G}_{2^{\infty}}$, such that $\uline{\epsilon}(\tau)=\tau (\uline{\pi})/\uline{\pi}=\uline{\epsilon}$. 

Let $T$ be a $G_K$-stable $\mathbb{Z}_2$-lattice in a semi-stable representation $V$ of dimension $d$ with Hodge-Tate weights in $[0,2]$. Let
$\mathfrak{M}$ be the Kisin module attached to $T$. Let $\mathcal{D}=S_{K_0}\otimes_{\varphi, \mathfrak{G}}\mathfrak{M}$ be the Breuil module attached to $\mathfrak{M}$ and $N$ be the monodromy operator on $\mathcal{D}$. We regard $\mathfrak{M}$ as an $\varphi(\mathfrak{G})$-submodule of $\mathcal{D}$. Select a $\varphi(\mathfrak{G})$-basis $\hat{e}_1,\ldots ,\hat{e}_d$ of $\mathfrak{M}$. We have $N(\hat{e}_1,\ldots ,\hat{e}_d)=(\hat{e}_1,\ldots,\hat{e}_d)U$ with $U$ a matrix with coefficients in $S_{K_0}$. Let $\tilde{S}=W(k)[[u^2,\frac{u^{2e}}{2}]]$

\begin{prop}
We have $U\in M_{d\times d} (\tilde{S}[\frac{1}{2}])$. If $V$ is crystalline, then furthermore $U\in u^2M_{d\times d}(\tilde{S}[\frac{1}{2}]\cap S)$. In particular, the hypothesis $p>2$ can be removed from \cite[Prop. 4.7 and Prop. 5.9]{GLS14}
\end{prop}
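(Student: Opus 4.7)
The plan is to replay the proofs of \cite[Prop.~4.7, Prop.~5.9]{GLS14} almost verbatim, with the uniformizer $\varpi$ from Lemma~2.1 substituted for the one produced by \cite[Lem.~5.1.2]{Liu08}. Liu's lemma is the unique step in \cite{GLS14} that requires $p>2$; its conclusion, namely the semidirect-product decomposition $\hat{G}\simeq \hat{G}_{p^{\infty}}\rtimes H_{K}$ together with the ability to pick a topological generator $\tau\in \hat{G}_{p^{\infty}}$ satisfying $\underline{\epsilon}(\tau)=\underline{\epsilon}$, is exactly what Lemma~2.1 (and the corollary stated just after it) supplies when $p=2$. Thus the $\tau$-based apparatus of \cite{GLS14} becomes available in our setting without change.

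With this $\tau$, I would interpret the monodromy operator $N$ on $\mathcal{D}=S_{K_{0}}\otimes_{\varphi,\mathfrak{G}}\mathfrak{M}$ through the Liu $(\varphi,\hat{G})$-module attached to $T$: $N$ is recovered from the linearized action of $\tau$ on $\mathcal{D}$. Writing $\tau(\hat{e}_{i})$ in terms of the basis $\hat{e}_{1},\ldots,\hat{e}_{d}$ and expanding in the variable $\varphi(\mathfrak{t})$ produces matrix coefficients a~priori lying in $W(R)[\tfrac{1}{2}]$; the content of \cite[Prop.~4.7]{GLS14} is that, after identification with elements of $S_{K_{0}}$ acting on $\mathcal{D}$, these coefficients are in fact contained in $\tilde{S}[\tfrac{1}{2}]$. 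The crystalline refinement of \cite[Prop.~5.9]{GLS14} then uses the additional fact that $N$ vanishes on $\mathcal{D}/I_{+}\mathcal{D}$, where $I_{+}$ is the augmentation ideal of $S\to W(k)$; this forces an extra factor of $u^{2}$ and lands the entries in $\tilde{S}[\tfrac{1}{2}]\cap S$.

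The main obstacle is not the strategy but the bookkeeping: one has to walk through each estimate in \cite[\S4--\S5]{GLS14} and confirm that no hidden factor of $2$ is created or lost outside those already absorbed into $\tilde{S}[\tfrac{1}{2}]=W(k)[[u^{2},u^{2e}/2]][\tfrac{1}{2}]$. Because \cite{GLS14} is formulated uniformly in $p$ apart from its appeal to \cite[Lem.~5.1.2]{Liu08}, and because the definitions of $\tilde{S}$, of the divided-power structure on $S$, and of the ideals $I^{[m]}W(R)$ all make equally good sense at $p=2$, the required verifications are essentially routine. The point worth monitoring most carefully is that the $2$-adic valuations of the divided powers $E(u)^{n}/n!$ remain large enough wherever they are invoked; this follows from $v_{2}(n!)=n-s_{2}(n)\leq n$, so the denominators introduced by the divided-power structure are controlled by the $u^{2e}/2$ generator of $\tilde{S}$ exactly as in the odd-$p$ case.
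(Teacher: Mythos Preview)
Your proposal has a genuine gap. You claim that Liu's lemma \cite[Lem.~5.1.2]{Liu08} is the \emph{unique} step in \cite{GLS14} requiring $p>2$, so that once Lemma~2.1 supplies the semidirect product $\hat{G}\simeq\hat{G}_{2^{\infty}}\rtimes H_K$ and the generator $\tau$, everything else is routine bookkeeping. This is not so: the proof of \cite[Prop.~4.7]{GLS14} invokes $p>2$ a second time (flagged there by a footnote), and that is precisely the step the present proposition must repair.

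The issue is the integrality assertion $U'=U/u^2\in M_{d\times d}(S)$ in the crystalline case. The paper recovers $N(x)/u^2$ from the formal logarithm
\[
\frac{N(x)}{u^{2}}=\sum_{n\geq 1}(-1)^{n-1}\,\frac{(\tau-1)^{n}}{n\,t\,u^{2}}(x),
\]
and must show this series converges in $A_{\mathrm{cris}}\otimes_{\varphi,\mathfrak{G}}\mathfrak{M}$. Since $(\tau-1)^{n}(x)\in u^{2}I^{[n]}W(R)\otimes\mathfrak{M}$ and $t=c\varphi(\mathfrak{t})$ with $\varphi(\mathfrak{t})$ a generator of $I^{[1]}W(R)$, one is reduced to proving that $\varphi(\mathfrak{t})^{n-1}/n\in A_{\mathrm{cris}}$ and tends to $0$. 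For odd $p$ this is easy (one has $t^{p-1}/p\in A_{\mathrm{cris}}$ with room to spare), but for $p=2$ the cases $n=2$ and $n=4$ are genuinely borderline and require the extra input $\varphi(\mathfrak{t})\in\mathrm{Fil}^{2}W(R)+2W(R)$, hence $\varphi(\mathfrak{t})/2\in A_{\mathrm{cris}}$, followed by a short case analysis on $v_{2}(n)$. Your suggested estimate $v_{2}(n!)=n-s_{2}(n)\leq n$ concerns divided powers of $E(u)$ inside $\tilde{S}$ and does not touch this $A_{\mathrm{cris}}$-convergence question at all; without the argument just described, the claim that $U'\in M_{d\times d}(S)$ (equivalently, that the entries lie in $\tilde{S}[\tfrac{1}{2}]\cap S$ rather than merely $\tilde{S}[\tfrac{1}{2}]$) is unsupported.
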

\begin{proof}
The statement that $U\in M_{d\times d}(\tilde{S}[\frac{1}{2}])$ has been proved in \cite[Prop. 4.7]{GLS14}. Now we assume that $V$ is crystalline. By the first part of the proof of Proposition 4.7 in \cite{GLS14}, we have $U\in u^2M_{d\times d}(\tilde{S}[\frac{1}{2}])$. Let $U=u^2U^{'}$. We need to show $U^{'}\in M_{d\times d}(S)$.

Following exactly the same idea of the proof of Proposition 2.13 in \cite{Liu12}, for any $x\in \mathcal{D}$ we have
\begin{equation}
\tau(x)=\sum_{i=0}^{\infty}\gamma_i(t)\otimes N^i(x).
\end{equation}
Since $\tau\in \hat{G}_{2^{\infty}}$ acts trivially on $\uline{\epsilon}$, it acts trivially on $t$. By computations we have
\begin{equation}
(\tau -1)^n(x)=\sum_{m=n}^{\infty}\left(\sum_{i_1+\cdots +i_n=m,i_j\geqslant 1}\frac{m!}{i_1!\cdots i_n!}\right)\gamma_m(t)\otimes N^m(x).
\end{equation}  
Following the proof of Proposition 4.7 in \cite{GLS14}, we have $(\tau -1)^n(x)\in u^2I^{[n]}W(R)\otimes_{\varphi ,\mathfrak{G}}\mathfrak{M}$.

 We next show that $((\tau -1)^n/ntu^2)(x)$ is well defined in $A_{\text{cris}}\otimes _{\varphi , \mathfrak{G}} \mathfrak{M}$ and $((\tau -1)^n/ntu^2)(x)\rightarrow 0$ $2$-adically as $n\rightarrow \infty$. Note that $t=c\varphi(\mathfrak{t})$ with $\varphi(\mathfrak{t})$ a generator of $I^{[1]}W(R)$. It suffices to show that for any $n$, $(\varphi(\mathfrak{t}))^{n-1}/n$ is in $A_{\text{cris}}$ and it goes to zero $2$-adically. Note that $\varphi (\mathfrak{t})\in \mathrm{Fil}^{2}W(R)+2W(R)$, we have $\varphi(\mathfrak{t})/2 \in A_{\text{cris}}$. So $\varphi(\mathfrak{t})^3/4=2(\varphi(\mathfrak{t})/2)^3\in A_{\text{cris}}$. This verifies that $(\varphi (\mathfrak{t}))^{n-1}/n \in A_{\mathrm{cris}}$ when $n=2$ or $4$. When $n$ is not $2$ or $4$, we let $n=2^sm$ with $2\nmid m$. We have
\begin{equation}
\frac{\varphi(\mathfrak{t})^{n-1}}{n}=\frac{\varphi(\mathfrak{t})^{n-1}}{(n-1)!}\frac{(2^sm-1)!}{2^sm}.
\end{equation}
Since $(\varphi(\mathfrak{t})^{i})/i!$ goes to zero $2$-adically, it suffices to show that $(2^sm-1)!/2^sm$ is in $\mathbb{Z}_2$ when $n$ is not $2$ or $4$. If $s=0$, this is trivial. If $s=1$, then $m\geqslant 3$. So $v_2((2m-1)!)\geqslant v_2(5!)\geqslant 1$. If $s=2$, then $m\geqslant 3$. So $v_2((4m-1)!)\geqslant v_2(11!)\geqslant 2$. If $s>2$, then $v_2((2^sm-1)!)\geqslant v_2((2^s-1)!)\geqslant v_2(2^{s-1}\cdot2)=s=v_2(2^sm)$. 

By a formal computation, we get 
\begin{equation}
\frac{N(x)}{u^2}=\sum_{n=1}^{\infty}(-1)^{n-1}\frac{(\tau -1)^{n}}{ntu^2}(x).
\end{equation}
So we have proved $N(x)/u^2 \in A_{\text{cris}}\otimes _{\varphi , \mathfrak{G}} \mathfrak{M}$. Since $A_{\text{cris}}\cap \tilde{S}[\frac{1}{2}]\subset A_{\text{cris}}\cap S[\frac{1}{2}]=S$, we are done. 
\end{proof}  

Now following the proof in \cite{GLS14}, we can show that the structure theorem for Kisin modules \cite[Thm. 4.22]{GLS14} still holds when $p=2$.

\begin{thm}
Assume that $K$ is unramified with uniformizer $\pi$ chosen as in Lemma 2.1, $V$ is crystalline with Hodge-Tate weights in $[0,2]$. Then there exists an $\mathcal{O}_{E}[[u]]$-basis $\{e_{j,s}\}$ of $\mathfrak{M}$ such that
\begin{enumerate}
\item[$\bullet$] $e_{1,s},\ldots ,e_{d,s}$ is an $\mathcal{O}_E[[u]]$-basis of $\mathfrak{M}_s$ for each $0\leq s \leq f-1$, and 
\item[$\bullet$] we have 
                      \[
                      \varphi(e_{1,s-1},\ldots , e_{d,s-1})=(e_{1,s}, \ldots ,e_{d,s})X_s \Lambda_s Y_s
                      \]
                      where $X_s$ and $Y_s$ are invertible matrices, $Y_s$ is congruent to the identity matrix modulo $2$, and $\Lambda_s$ is matrix $[E(u)^{r_{1,s}}, \ldots ,E(u)^{r_{d,s}}]$.
\end{enumerate}
\end{thm}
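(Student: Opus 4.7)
The plan is to follow the proof of \cite[Thm. 4.22]{GLS14} almost verbatim, substituting the strengthened Proposition 4.1 above for \cite[Prop. 4.7 and Prop. 5.9]{GLS14} at every point where the hypothesis $p>2$ is invoked. Because the uniformizer $\varpi$ from Lemma 2.1 already yields the semidirect-product decomposition $\hat{G}\simeq \hat{G}_{2^{\infty}}\rtimes H_K$ and a topological generator $\tau$ with the same formal properties as in \cite{GLS14}, the group-theoretic setup of that proof transfers to $p=2$ without change.

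The first step is to apply the standard structure theory of Kisin modules of $E$-height at most $2$ attached to crystalline representations. For $K$ unramified, each component $\mathfrak{M}_s$ is free of rank $d$ over $\mathcal{O}_E[[u]]$, and the elementary divisor theorem permits a choice of $\mathcal{O}_E[[u]]$-basis in which the matrix of $\varphi:\mathfrak{M}_{s-1}\to \mathfrak{M}_s$ takes the shape $X_s\Lambda_s$, with $X_s\in \GL_d(\mathcal{O}_E[[u]])$ and $\Lambda_s=[E(u)^{r_{1,s}},\ldots,E(u)^{r_{d,s}}]$. This step is purely algebraic and independent of $p$.

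The second step is to feed in Proposition 4.1. The bound $U\in u^2 M_{d\times d}(\tilde{S}[\tfrac{1}{2}]\cap S)$ encodes a compatibility between $\varphi$ and the monodromy $N$ modulo $2$; unwinding this congruence exactly as in \cite{GLS14}, one obtains a further right change of basis for $\Lambda_s$ by a matrix $Y_s\equiv I\pmod 2$. Combining with the previous step produces the factorization $\varphi(e_{1,s-1},\ldots,e_{d,s-1})=(e_{1,s},\ldots,e_{d,s})X_s\Lambda_s Y_s$ demanded by the theorem.

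The main obstacle, and the only step at which $p=2$ creates genuine difficulty, is precisely the extension of \cite[Prop. 4.7]{GLS14} that was carried out in Proposition 4.1 above. Once the bound on $U$ is in hand, the remaining manipulations are purely formal linear algebra over $\mathcal{O}_E[[u]]$ and $\tilde{S}[\tfrac{1}{2}]$, and they transcribe verbatim from \cite{GLS14}. I do not anticipate any new subtlety beyond the $2$-adic convergence estimates already handled in the proof of Proposition 4.1.
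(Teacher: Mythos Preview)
Your proposal is correct and takes essentially the same approach as the paper: the paper's proof simply notes that the hypothesis $p>2$ is used in only one place in the proof of \cite[Thm.~4.22]{GLS14}, namely in \cite[Prop.~4.7]{GLS14}, and that Proposition~4.1 supplies the missing argument. Your write-up is more detailed in sketching the two-step structure of the \cite{GLS14} argument, but the strategy and the identification of the unique $p=2$ obstruction are identical.
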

\begin{proof}
The hypothesis $p>2$ is only used in one place in the proof of \cite[Thm. 4.22]{GLS14}, namely in the proof of \cite[Prop. 4.7]{GLS14} (see the footnote in the proof of that item), and our Proposition 4.1 provides the missing argument in that case.
\end{proof}

Now that we have extended \cite[Thm. 4.22]{GLS14} to the case $p=2$, the proof of \cite[Thm. 7.9]{GLS14} carries over to the case $p=2$; one must only adjust the statement to account for the additional special case $(r_0,\ldots  ,r_{f-1})=(2, \ldots  ,2)$, $J=\{0, \ldots ,f-1\}$, and $a=b$ that one finds in \cite[Thm. 7.4]{GLS14}. To be precise, one obtains the following.

\begin{thm}
Suppose that $K/\mathbb{Q}_2$ is unramified with uniformizer $\pi$ chosen as in Lemma 2.1. Let $T$ be a $G_K$-stable $\mathcal{O}_E$-lattice in crystalline representation $V$ of $E$-dimension $2$ whose $\kappa_s$-labeled Hodge-tate weights are $\{0,r_s\}$ with $r_s\in [1,2]$ for all $s$. Let $\mathfrak{M}$ be the Kisin module attached to $T$, and let $\overline{\mathfrak{M}}:=\mathfrak{M}\otimes_{\mathcal{O}_E}k_E$.

Assume that the $k_E[G_K]$-module $\overline{T}:=T/\mathfrak{m}_ET$ is reducible. Then $\overline{\mathfrak{M}}$ is an extension of two $\varphi$-modules of rank one, and there exist $a,b\in k^{\times}_E$ and a subset $J\subset \{0,\ldots , f-1\}$ so that $\overline{\mathfrak{M}}$ is as follows.

Set $h_i=r_i$ if $i\in J$ and $h_i=0$ if $i\not\in J$. Then $\overline{\mathfrak{M}}$ is an extension of $\overline{\mathfrak{M}}(h_0,\ldots , h_{f-1} ; a)$ by $\overline{\mathfrak{M}}(r_0-h_0,\ldots , r_{f-1}-h_{f-1} ; b)$, and we can choose bases $e_i, f_i$ of the $\overline{\mathfrak{M}}_i$ so that $\varphi$ has the form
\[
\varphi (e_{i-1})= (b)_iu^{r_i-h_i}e_i
\]
\[
\varphi (f_{i-1})= (a)_iu^{h_i}f_i +x_i e_i
\]
with $x_i=0$ if $i\not\in J$ and $x_i\in k_E$ constant if $i\in J$, except in the following two cases:
\begin{enumerate}
\item[$\bullet$] $(r_0, \ldots , r_{f-1})\in \mathcal{P}$, $J=\{i: r_{i-1}\not= 2\}$, and $a=b$, or
\item[$\bullet$] $(r_0, \ldots, r_{f-1})=(2, \ldots , 2)$, $J=\{0, \ldots ,f-1\}$, and $a=b$.
\end{enumerate}
In that case fix $i_0\in J$; then $x_i$ may be taken to be $0$ for all $i\not\in J$, to be a constant for all $i$ except $i=i_0$, and to be the sum of a constant and a term of degree $2$ (for the first exceptional case) or degree $4$ (for the second exceptional case).

Finally, $\overline{T}|_{I_K}\simeq
\left(
\begin{array}{cc}
\prod_{i\in J}\omega_{i}^{r_i} & \ast\\
0 & \prod_{i\not\in J}\omega_{i}^{r_i}\\
\end{array}
\right)$. 
\end{thm}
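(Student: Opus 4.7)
The plan is to follow the proof of \cite[Thm.~7.9]{GLS14} essentially verbatim, substituting our Theorem 4.2 for \cite[Thm.~4.22]{GLS14} and tracking the additional exceptional case that appears in \cite[Thm.~7.4]{GLS14} when $p=2$. First I would apply Theorem 4.2 to obtain a basis $\{e_{j,s}\}$ of $\mathfrak{M}$ with respect to which the matrix of $\varphi$ on $\mathfrak{M}_s$ has the form $X_s\Lambda_sY_s$ with $Y_s\equiv I\pmod 2$ and $\Lambda_s=\diag(E(u)^{r_{1,s}},E(u)^{r_{2,s}})$. Reducing modulo $\mathfrak{m}_E$, the Frobenius matrix factors as $X_s\Lambda_s$ with $X_s\in \GL_2(k_E[[u]])$; since $K/\mathbb{Q}_2$ is unramified, $E(u)=u-\pi$, and the Hodge--Tate hypothesis forces each $r_{i,s}\in\{0,1,2\}$.

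Next, I use the reducibility hypothesis. By the standard dictionary between Kisin modules and $G_{K_\infty}$-representations (as in \cite[Lem.~7.2]{GLS14}), the $k_E[[u]][\varphi]$-module $\overline{\mathfrak{M}}$ inherits an extension structure by two rank-one $\varphi$-modules, and the diagonal exponents get distributed according to a subset $J\subset\{0,\ldots,f-1\}$: set $h_i=r_i$ for $i\in J$ and $h_i=0$ otherwise. After adjusting the bases by units in $k_E[[u]]^{\times}$ and running the standard triangularization step of \cite[\S 7]{GLS14}, one brings $\varphi$ into the claimed form with diagonal entries $(b)_iu^{r_i-h_i}$ and $(a)_iu^{h_i}$ and a single off-diagonal entry $x_i\in k_E[[u]]$ still to be normalized.

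The remaining content is the classification of $x_i$, which is precisely the subject of \cite[Thm.~7.4]{GLS14}. Applied at $p=2$, the absorption procedure (using $\varphi(u)=u^2$ to trade off-diagonal monomials against shifts of $x_j$ at other indices) shows that $x_i$ can be reduced to a constant for $i\in J$ and to $0$ for $i\notin J$, \emph{except} in two degenerate situations: the original case $(r_0,\ldots,r_{f-1})\in\mathcal{P}$, $J=\{i:r_{i-1}\neq 2\}$, $a=b$, in which the degree-$2$ coefficient cannot be absorbed, and the additional $p=2$ case $(r_0,\ldots,r_{f-1})=(2,\ldots,2)$, $J=\{0,\ldots,f-1\}$, $a=b$, in which the absorption also fails on the degree-$2p=4$ coefficient. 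The main obstacle is to verify this second case carefully: one must check that when every $r_i=2$, every $h_i=r_i$, and $a=b$, the system of linear equations controlling absorption is degenerate precisely on the degree-$4$ monomial, and that after concentrating the surviving obstructions at a single chosen $i_0\in J$ (by the same manipulation used in the original proof at $i_0$) the $x_i$ take the stated form.

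Finally, the identification of $\overline{T}|_{I_K}$ is unchanged from \cite[Thm.~7.9]{GLS14}: the tame inertia characters read off from the diagonal rank-one subquotients $\overline{\mathfrak{M}}(h_0,\ldots,h_{f-1};a)$ and $\overline{\mathfrak{M}}(r_0-h_0,\ldots,r_{f-1}-h_{f-1};b)$ are $\prod_{i\in J}\omega_i^{r_i}$ and $\prod_{i\notin J}\omega_i^{r_i}$ respectively, giving the claimed upper-triangular shape.
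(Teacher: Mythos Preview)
Your proposal is correct and follows essentially the same approach as the paper. The paper's proof is in fact much terser than yours: it simply refers the reader to the proof of \cite[Thm.~7.9]{GLS14} and remarks that the second exceptional case is handled exactly as the first, relying implicitly on Theorem~4.2 (the $p=2$ extension of \cite[Thm.~4.22]{GLS14}) as the only new input---precisely the strategy you outline in detail.
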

\begin{proof}
The proof of the second exceptional case is exactly the same as the first exceptional case. See the proof of \cite[Thm. 7.9]{GLS14}.
\end{proof}

\begin{cor}
Suppose that $K/\mathbb{Q}_2$ is unramfied. Let $\overline{\rho}:G_K\rightarrow \GL_2(\overline{\mathbb{F}}_2)$ be the reduction mod $2$ of a $G_K$-stable $\overline{\mathbb{Z}}_2$-lattice in a crystalline $\overline{\mathbb{Q}}_2$-representation of dimension 2 whose $\kappa$-labeled Hodge-Tate weights are $\{0,r_{\kappa}\}$ with $r_{\kappa}\in [1,2]$ for all $\kappa$.

Assume that $\overline{\rho}$ is reducible. Let $S=\Hom(k,\overline{\mathbb{F}}_2)$, and identify the set $S$ with $\Hom_{\mathbb{Q}_2}(K, \overline{\mathbb{Q}}_2)$. Then there is a subset $J\subset S$ such that
\[
\overline{\rho}|_{I_K}\simeq
\left(
\begin{array}{cc}
\prod_{i\in J}\omega_{i}^{r_i} & \ast\\
0 & \prod_{i\not\in J}\omega_{i}^{r_i}\\
\end{array}
\right)
\]
\end{cor}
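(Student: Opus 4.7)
The corollary is essentially a reformulation of the last line of Theorem 4.3 in coefficient-free language, so the plan is to reduce to the hypotheses of that theorem and then read off the conclusion.

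First I would fix a uniformizer $\varpi$ of $K$ as in Lemma 2.1; since $K/\Q_2$ is unramified this is just a choice of uniformizer, and the shape of $\overline{\rho}|_{I_K}$ to be proved does not involve $\varpi$, so this is a harmless choice. Next, because the given crystalline representation has dimension two and the coefficients of the matrices of Frobenius on any lattice lie in some finite extension, I would choose a sufficiently large finite extension $E/\Q_2$ inside $\overline{\Q}_2$ such that the crystalline lift is defined over $E$, select a $G_K$-stable $\mathcal O_E$-lattice $T$ in it, and set $\overline T = T/\m_E T$. Then $\overline\rho$ is obtained from $\overline T$ by extending scalars to $\overline{\F}_2$, and in particular $\overline T$ is reducible because $\overline\rho$ is.

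Now I would apply Theorem 4.3 to this $T$: the $\kappa_s$-labeled Hodge--Tate weights are $\{0,r_s\}$ with $r_s \in [1,2]$ by hypothesis, and the uniformizer is $\varpi$, so all hypotheses of the theorem are satisfied. The final displayed formula in Theorem 4.3 provides a subset $J \subset \{0,\ldots,f-1\}$ such that
\[
\overline T|_{I_K} \simeq
\begin{pmatrix}
\prod_{i\in J}\omega_i^{r_i} & \ast \\
0 & \prod_{i\notin J}\omega_i^{r_i}
\end{pmatrix},
\]
and this holds uniformly, including in the two exceptional cases (where the only change is the more detailed description of the extension class, not of the diagonal characters). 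Extending scalars from $k_E$ to $\overline{\F}_2$ then gives the asserted shape of $\overline\rho|_{I_K}$.

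The only bookkeeping step is to identify the index set $\{0,\ldots,f-1\}$ used in Theorem 4.3 with $S = \Hom(k,\overline{\F}_2)$: the $s$-th component of the Kisin module $\mathfrak M$ corresponds to one of the $f$ embeddings $k \hookrightarrow k_E$ via the action of Frobenius on $W(k)$, and composing with $k_E \hookrightarrow \overline{\F}_2$ gives the identification with $S$, under which $\omega_i$ becomes the fundamental character indexed by the corresponding element of $S$. There is no real obstacle here; the content of the corollary is entirely contained in Theorem 4.3, and the task is only to translate its conclusion into the intrinsic language of $\overline\rho$.
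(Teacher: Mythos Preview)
Your proposal is correct and follows exactly the paper's approach: the paper's proof reads ``This follows immediately from Theorem 4.3,'' and what you have written is precisely a careful unwinding of that one line (descent to finite coefficients, application of Theorem 4.3, and identification of the index set $\{0,\ldots,f-1\}$ with $S$).
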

\begin{proof}
This follows immediately from Theorem 4.3.
\end{proof}

The discussion of \cite[\S 8]{GLS14} now carries over directly to the case $p=2$, with the following minor modification. In the definition of $J_{\text{max}}$ preceding \cite[Prop. 8.8]{GLS14}, according to \cite[Lem. 7.1]{GLS14} there is one more case to consider when $p=2$, namely when $r_i=2$ for all $i$ and $J=\varnothing$ or $\{0,\ldots , f-1 \}$. In this case we set $J_{\text{max}}=\{0,\ldots , f-1\}$. Then we have

\begin{prop}
Let $\hat{\overline{\mathfrak{M}}}$ be a $(\varphi , \hat{G})$-module of type $(\vec{r}, a,b, J)$, and set $h=h(J)$. Then there exists a $(\varphi , \hat{G})$-module $\hat{\overline{\mathfrak{N}}}$ of type $(\vec{r},a,b,J_{\text{max}})$, such that $\hat{T}(\hat{\overline{\mathfrak{N}}}) \simeq \hat{T}(\hat{\overline{\mathfrak{M}}})$.
\end{prop}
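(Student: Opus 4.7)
The plan is to follow the proof of \cite[Prop.~8.8]{GLS14} essentially verbatim, and to verify that it extends to the newly added exceptional case of Theorem 4.3 when $p=2$---namely, when $(r_0,\ldots,r_{f-1})=(2,\ldots,2)$, $J=\varnothing$ or $\{0,\ldots,f-1\}$, and $a=b$, in which case $J_{\text{max}}=\{0,\ldots,f-1\}$.

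First, I would reduce to the case in which $J$ and $J_{\text{max}}$ differ at exactly one index $i_0$; an induction on $|J_{\text{max}}\setminus J|$ then yields the general statement. For such an $i_0$, I would construct $\hat{\overline{\mathfrak{N}}}$ by performing an explicit change of basis on the underlying Kisin module $\overline{\mathfrak{M}}$ that reallocates the twist term $x_{i_0}$ from Theorem 4.3 between the two rank-one subquotients, effectively moving $i_0$ into $J$. Next, one checks that this change of basis lifts to the $(\varphi,\hat{G})$-module structure so that the étale realization $\hat{T}$ is preserved. This uses the fact that in our rank-two setting the $\hat{G}$-action is determined by the underlying $\varphi$-module together with the compatibility coming from the $\tau$-action described in Proposition 4.1, and that the change-of-basis matrices can be chosen congruent to the identity modulo a sufficiently high power of $u$.

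The main obstacle is the second exceptional case. In the first exceptional case, already treated in \cite[Thm.~7.9]{GLS14} and \cite[Prop.~8.8]{GLS14}, the twist $x_{i_0}$ carries a constant plus a degree-$2$ monomial in $u$, and the change of basis in the GLS14 argument is designed precisely to absorb this degree-$2$ term. In the new case produced by the $p=2$ structure theorem, $x_{i_0}$ carries a constant plus a degree-$4$ monomial, and one must verify that an analogous change of basis absorbs it while keeping all matrices invertible and congruent to $1$ modulo the appropriate power of $u$. Since the GLS14 argument is structural---depending only on $x_{i_0}$ being a polynomial in $u$ of bounded degree and on the invertibility of the resulting matrices---the same calculation carries through, with only the bookkeeping for the degree-$4$ monomial in place of the degree-$2$ one. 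No genuinely new idea is needed; the work is in tracking the higher-degree correction through the matrix manipulations.
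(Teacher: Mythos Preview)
Your proposal misidentifies where the new work lies. The second exceptional case of Theorem~4.3 (where the degree-$4$ term appears) has $J=\{0,\ldots,f-1\}$, which \emph{equals} $J_{\text{max}}$; so for that $J$ there is nothing to prove in this proposition, and no degree-$4$ manipulation is ever needed. The genuinely new case for Proposition~4.5 is $\vec{r}=(2,\ldots,2)$ with $J=\varnothing$. But then every $i$ lies outside $J$, so Theorem~4.3 forces $x_i=0$ for all $i$: the underlying Kisin module $\overline{\mathfrak{M}}$ is a \emph{split} extension, and consequently so is the $(\varphi,\hat{G})$-module $\hat{\overline{\mathfrak{M}}}$. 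Since $a=b$, the two rank-one pieces may be interchanged, immediately producing a (split) $\hat{\overline{\mathfrak{N}}}$ of type $(\vec{r},a,b,J_{\text{max}})$ with the same $\hat{T}$. This is the paper's argument, and it is a one-line observation rather than a change-of-basis computation.

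There is also a structural problem with your induction. Reducing to the case where $J$ and $J_{\text{max}}$ differ at a single index presumes that the intermediate subsets $J'$ encountered along the way still satisfy $h(J')=h(J)$, so that ``type $(\vec{r},a,b,J')$'' is meaningful and comparable. In the situation $\vec{r}=(2,\ldots,2)$, $J=\varnothing$, $J_{\text{max}}=\{0,\ldots,f-1\}$, the relevant instance of \cite[Lem.~7.1]{GLS14} gives only these two subsets with the same $h$; no proper nonempty $J'$ qualifies, so the one-index-at-a-time reduction is not available. The correct fix is simply to treat $J=\varnothing$ directly via the splitness observation above, and to defer all remaining cases to \cite[Prop.~8.8]{GLS14} unchanged.
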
 
\begin{proof}
If $\vec{r}=(2,\ldots ,2)$ and $J=\varnothing$, then the ambient Kisin module $\overline{\mathfrak{M}}$ is split, and the extension $\hat{\overline{\mathfrak{M}}}$ is also split. So in this case there is nothing to prove. For the other cases, see the proof of \cite[Prop. 8.8]{GLS14}
\end{proof}
We have the following Theorem.
\begin{thm}
Let $K/\mathbb{Q}_2$ be a finite unramified extension and $\rho:G_K\rightarrow \GL_2(\overline{\mathbb{Z}}_2)$ be a continuous representation such that $\overline{\rho}:G_K\rightarrow \GL_2(\overline{\mathbb{F}}_2)$ is reducible. Suppose that $\rho$ is crystalline with $\kappa$-Hodge-Tate weights $\{b_{\kappa ,1}, b_{\kappa ,2}\}$ for each $\kappa\in \Hom_{\mathbb{Q}_2}(K, \overline{\mathbb{Q}}_2)$, and suppose further that $1\leq b_{\kappa ,1}-b_{\kappa , 2}\leq 2$ for each $\kappa$.

Then there is a reducible crystalline representation $\rho':G_K\rightarrow \GL_2(\overline{\mathbb{Z}}_2)$ with the same $\kappa$-Hodge-Tate weights as $\rho$ for each $\kappa$, such that $\overline{\rho}\simeq \overline{\rho}'$.
\end{thm}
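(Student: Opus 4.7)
The plan is to mirror the argument of \cite{GLS14} in the case $p>2$, using the $p=2$ extensions of their structural results established above. First, by twisting $\rho$ by a suitable crystalline character, I would reduce to the case where the $\kappa$-labeled Hodge-Tate weights of $\rho$ are $\{0, r_\kappa\}$ with $r_\kappa \in \{1, 2\}$ for each $\kappa \in S := \Hom(k, \overline{\mathbb{F}}_2)$; the conclusion of the theorem is invariant under such twists, because both the reducibility of $\overline{\rho}'$ and the Hodge-Tate condition transform in the same way under twisting.

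Corollary 4.4 then furnishes a subset $J \subseteq S$ and identifies $\overline{\rho}|_{I_K}$ as an upper-triangular representation whose diagonal characters are $\prod_{i \in J} \omega_i^{r_i}$ and $\prod_{i \notin J} \omega_i^{r_i}$. I would lift these two characters to crystalline characters $\chi_1, \chi_2 : G_K \to \overline{\mathbb{Z}}_2^{\times}$ whose $\kappa_i$-labeled Hodge-Tate weights are $r_i$ (when $i \in J$ for $\chi_1$, respectively $i \notin J$ for $\chi_2$) and $0$ elsewhere; such lifts exist by standard constructions (Lubin--Tate / unramified times cyclotomic). Any extension of $\chi_2$ by $\chi_1$ will then automatically have the correct $\kappa$-labeled Hodge-Tate weights in each embedding, so the task is reduced to producing a \emph{crystalline} such extension whose reduction is the given extension class of $\overline{\rho}$.

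To construct this extension, I would work at the level of Kisin modules. By Proposition 4.5 the $(\varphi, \hat{G})$-module of $\overline{\rho}$ may be taken to have type $(\vec{r}, a, b, J_{\max})$, and Theorem 4.3 gives an explicit matrix form for $\varphi$ with entries that are the monomials $(b)_i u^{r_i - h_i}$, $(a)_i u^{h_i}$, together with off-diagonal terms $x_i \in k_E$ (up to a single higher-degree correction in one coordinate in the exceptional cases). I would lift each of these entries to $\mathcal{O}_E$, obtaining an upper-triangular Kisin module $\mathfrak{M}'$ over $\mathcal{O}_E[[u]]$ whose rank-one subquotients correspond to $\chi_1$ and $\chi_2$; the $G_K$-representation $\rho'$ attached to $\mathfrak{M}'$ is then a reducible crystalline lift of $\overline{\rho}$ with the prescribed Hodge-Tate weights.

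The main obstacle is the new exceptional case of Theorem 4.3 that is specific to $p=2$, namely $\vec{r} = (2, \ldots, 2)$ with $J = \{0, \ldots, f-1\}$ and $a = b$, where the matrix of $\varphi$ contains a degree-$4$ correction that is not a constant in $k_E$. Here, however, Proposition 4.5 tells us that the ambient $(\varphi, \hat{G})$-module is already split, so the extension class is trivial and the construction collapses to independently lifting two rank-one Kisin modules — handled exactly as \cite[\S 8]{GLS14} treats the first exceptional case. The remaining cases go through without further modification.
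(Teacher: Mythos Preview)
Your overall strategy matches the paper's: twist to weights $\{0,r_\kappa\}$, apply Corollary~4.4 to pin down the diagonal characters, and then follow the lifting argument of \cite[Thm.~9.1]{GLS14} using Theorem~4.3 and Proposition~4.5. The gap is in your treatment of the new exceptional case $\vec{r}=(2,\ldots,2)$, $J=\{0,\ldots,f-1\}$, $a=b$. You assert that ``Proposition~4.5 tells us that the ambient $(\varphi,\hat G)$-module is already split,'' but this misreads that proposition: the split observation in its proof pertains to the case $J=\varnothing$, and is used only to show that passing from $J=\varnothing$ to $J_{\max}=\{0,\ldots,f-1\}$ requires no work. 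A general module of type $(\vec r,a,b,\{0,\ldots,f-1\})$ with $\vec r=(2,\ldots,2)$ and $a=b$ is \emph{not} split --- the degree-$4$ term in Theorem~4.3 is exactly what encodes a nontrivial extension class --- so your reduction to lifting two rank-one modules independently does not go through.

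The paper sidesteps this by carving out a slightly larger locus: whenever $b_{\kappa,1}-b_{\kappa,2}=2$ for all $\kappa$ and $(\overline{\psi}_1\overline{\psi}_2^{-1})|_{I_K}=1$ (which, since the mod~$2$ cyclotomic character is trivial, contains the second exceptional case), it appeals directly to \cite[Lem.~9.4]{GLS14}, whose proof is valid for $p=2$. Only outside that locus does it run the Kisin-module lifting argument of \cite[Thm.~9.1]{GLS14} as you describe.
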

\begin{proof}
Write $
\overline{\rho}\simeq
\left(
\begin{array}{cc}
\overline{\psi}_1 & \ast\\
0 & \overline{\psi}_2\\
\end{array}
\right)
$. By Corollary 4.4, there is a decomposition $\text{Hom}_{\mathbb{Q}_2}(K, \overline{\mathbb{Q}}_2)=J\coprod J^{c}$ such that $\overline{\psi}_1|_{I_K}=\prod_{\kappa\in J}\omega_{\overline{\kappa}}^{b_{\kappa , 1}}\prod_{\kappa\in J^{c}}\omega_{\overline{\kappa}}^{b_{\kappa , 2}}$ and $\overline{\psi}_2|_{I_K}=\prod_{\kappa\in J^c}\omega_{\overline{\kappa}}^{b_{\kappa , 1}}\prod_{\kappa\in J}\omega_{\overline{\kappa}}^{b_{\kappa , 2}}$. If $b_{\kappa, 1}-b_{\kappa, 2}=2$ for all $\kappa$ and $(\overline{\psi}_1\overline{\psi}_{2}^{-1})|_{I_K}=1$, the result follows from \cite[Lem. 9.4]{GLS14}, whose proof allows $p=2$ (note that the mod $2$ cyclotomic character is trivial). In all other cases, thanks to Theorem 4.3 and Proposition 4.5 we can proceed as in the proof of \cite[Thm. 9.1]{GLS14}.
\end{proof}

\begin{cor}
If $K/\mathbb{Q}_2$ is a finite unramified extension, then
\[
W^{\mathrm{cris}}(\overline{\rho})\subseteq W^{\mathrm{explicit}}(\overline{\rho}).
\] 
\end{cor}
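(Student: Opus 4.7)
The plan is to deduce Corollary 4.7 from Theorem 4.6 in the reducible case, and from Corollary 4.4 applied over the quadratic unramified extension in the irreducible case; this is the $p=2$ analogue of the closing argument of \cite[\S 9]{GLS14}, and the structural results of Section 4 are precisely what is needed to make it run.

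Fix $a\in W^{\mathrm{cris}}(\overline{\rho})$ with a witnessing crystalline lift $\rho$ of Hodge type $\lambda$, where $\lambda$ is a lift of $a$. Since $K/\mathbb{Q}_2$ is unramified we have $e=1$, so each $\sigma\in\Hom(k,\overline{\mathbb{F}}_2)$ lifts uniquely to $\kappa$ and $\mathrm{HT}_\kappa(\rho)=\{a_{\sigma,1}+1,a_{\sigma,2}\}$ with $a_{\sigma,1}+1-a_{\sigma,2}\in\{1,2\}$; in particular the labeled Hodge--Tate weights of $\rho$ satisfy the hypotheses of Theorem 4.6.

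In the reducible case, Theorem 4.6 produces a reducible crystalline lift $\rho'$ of $\overline{\rho}$ with the same labeled Hodge--Tate weights, which is an upper triangular crystalline lift of $\overline{\rho}$ of Hodge type $\lambda$; the reducible clause of the definition of $W^{\mathrm{explicit}}(\overline{\rho})$ then gives $a\in W^{\mathrm{explicit}}(\overline{\rho})$. In the irreducible case, I pass to the quadratic unramified extension $K'/K$ with residue field $k'$, over which $\overline{\rho}|_{G_{K'}}$ is reducible and $\rho|_{G_{K'}}$ is a crystalline lift with the same Hodge--Tate weights. Corollary 4.4 applied over $K'$ produces a subset $J\subset\Hom(k',\overline{\mathbb{F}}_2)$ and the required diagonal description of $\overline{\rho}|_{I_{K'}}=\overline{\rho}|_{I_K}$. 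Irreducibility of $\overline{\rho}$ over $K$ forces $\mathrm{Gal}(k'/k)$ to interchange the two diagonal characters; since the Frobenius action on $\Hom(k',\overline{\mathbb{F}}_2)$ partitions it into size-two orbits, one above each $\sigma\in\Hom(k,\overline{\mathbb{F}}_2)$, this forces $J$ to contain exactly one extension of each $\sigma$. Taking all $\delta_\sigma=0$ (permissible because $e=1$) then matches the irreducible clause of the definition of $W^{\mathrm{explicit}}(\overline{\rho})$.

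The only non-routine point is the combinatorial step in the irreducible case, namely verifying that the subset $J$ delivered by Corollary 4.4 can be arranged to meet each fiber $\Hom(k',\overline{\mathbb{F}}_2)\to\Hom(k,\overline{\mathbb{F}}_2)$ in exactly one element. This is forced by the Galois symmetry dictated by irreducibility of $\overline{\rho}$ and is identical to the closing argument of \cite[\S 9]{GLS14} in the case $p>2$, now legitimized for $p=2$ by Theorems 4.3 and 4.6 and Proposition 4.5.
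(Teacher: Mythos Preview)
Your argument is correct and follows essentially the same route as the paper: the reducible case is exactly Theorem~4.6 combined with Definition~3.4, and the irreducible case is handled by passing to the quadratic unramified extension $K'$, applying the reducible analysis there, and using the Galois symmetry to force the balanced condition on $J$. One small correction: the irreducible-case combinatorics you invoke is the content of \cite[\S 10]{GLS14}, not \S 9 (Section~9 treats the reducible case, culminating in Theorem~9.1), and indeed the paper itself cites Section~10 here.
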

\begin{proof}
If $\overline{\rho}$ is reducible, this is an immediate consequence of Theorem 4.6 and Definition 3.4. If $\overline{\rho}$ is irreducible, it can be deduced from the reducible case following the idea in Section 10 of \cite{GLS14}.   
\end{proof}

\section{Revision of \texorpdfstring{\cite{GLS15}}{} when \texorpdfstring{$p=2$}{}}

In this section, we revise the paper \cite{GLS15} for the case $p=2$. Note that $p$ is $2$ in this section. We use the same terminology and notations as in \cite{GLS15}; in particular we refer the reader to \cite[Def. 2.3.1]{GLS15} for the definition of a pseudo-BT representation. The goal is to prove our main local result that $W^{\mathrm{cris}}(\overline{\rho})\subseteq W^{\mathrm{explicit}}(\overline{\rho})$ in the case that $K/\mathbb{Q}_2$ is a possibly ramified finite extension.

We have our main structure theorem for Kisin modules associated to pseudo-BT representations, thus generalizing \cite[Thm. 2.4.1]{GLS15} to the case $p=2$.
\begin{thm}
Assume that $K$ is a finite extension of $\mathbb{Q}_p$ with uniformizer $\pi$ chosen as in Lemma 2.1. Let $\mathfrak{M}$ be the Kisin module corresponding to a lattice in a pseudo-BT representation $V$ of weight $\{r_i \}$. Then there exists an $\mathcal{O}_E[[u]]$-basis $\{\mathfrak{e}_i, \mathfrak{f}_i\}$ of $\mathfrak{M}_i$ for all $0\leq i \leq f-1$ such that
\[
\varphi (\mathfrak{e}_{i-1}, \mathfrak{f}_{i-1})=(\mathfrak{e}_i, \mathfrak{f}_i)X_i \left(
\prod_{i=1}^{e-1}\Lambda_{i,e-j}Z_{i,e-j}
\right)\Lambda_{i,0}Y_i,
\]
for $X_i$,$Y_i\in \text{GL}_2(\mathcal{O}_{E}[[u]])$ with $Y_i\equiv I_2 (\text{mod } \mathfrak{m}_E)$, matrices $Z_{i,j}\in GL_2(\mathcal{O}_E)$ for all $j$, and $
\Lambda_{i,0}=
\left(
\begin{array}{cc}
1 & 0\\
0 & (u-\pi_{i,0})^{r_i}\\
\end{array}
\right)$
and
$
\Lambda_{i,j}=
\left(
\begin{array}{cc}
1 & 0\\
0 & u-\pi_{ij}\\
\end{array}
\right)$ for $j=1,\ldots, e-1$.

\end{thm}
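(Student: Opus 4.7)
The plan is to follow the proof of \cite[Thm. 2.4.1]{GLS15} essentially verbatim, isolating and repairing the places where the hypothesis $p>2$ is used. The argument in loc.\ cit.\ is an inductive procedure: starting from any $\mathcal{O}_{E}[[u]]$-basis of $\mathfrak{M}_{i}$, one progressively modifies it, using the monodromy operator $N$ on the ambient module $\mathcal{D}=S_{K_0}\otimes_{\varphi,\mathfrak{G}}\mathfrak{M}$, to peel off the factors $(u-\pi_{i,0})^{r_i}, u-\pi_{i,1},\ldots, u-\pi_{i,e-1}$ from the Frobenius matrix one root of $E(u)$ at a time. At each such step one reads off the corresponding block $\Lambda_{i,j}$ together with an intermediate unit matrix $Z_{i,j}$, and one finally packages the residual change-of-basis into $X_i$ and $Y_i$ with $Y_i\equiv I_2 \pmod{\mathfrak{m}_E}$.

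The only $p$-adic Hodge theoretic input used along the way is an integrality statement for the monodromy operator, namely that $N(x)/u^2$ lies in $S\otimes_{\varphi,\mathfrak{G}}\mathfrak{M}$ for $x\in \mathfrak{M}$. In \cite{GLS15} this is obtained by combining the field extension lemma \cite[Lem. 5.1.2]{Liu08} with the pseudo-BT analogue of \cite[Prop. 4.7]{GLS14}, both of which originally required $p>2$. Both obstructions disappear in our setting: the choice of uniformizer $\varpi$ provided by Lemma 2.1 gives $K_{2^{\infty}}\cap K_{\infty}=K$, hence $\hat{G}\simeq \hat{G}_{2^{\infty}}\rtimes H_K$ and the conclusion of \cite[Lem. 5.1.2]{Liu08} in the case $p=2$; and the $2$-adic valuation estimates for $\varphi(\mathfrak{t})^{n-1}/n$ carried out in the proof of Proposition 4.1 apply unchanged to any semi-stable representation with Hodge-Tate weights in $[0,2]$, which is exactly the generality of the pseudo-BT hypothesis.

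With these two substitutions inserted, the remaining content of \cite[\S 2.4]{GLS15} is purely algebraic manipulation of matrices over $\mathcal{O}_{E}[[u]]$ tracking which units are congruent to the identity modulo $\mathfrak{m}_{E}$; this bookkeeping never invokes the hypothesis $p>2$ and carries over verbatim. The main obstacle I anticipate is simply a careful audit, in the ramified part of the argument, to confirm that no implicit use of $p>2$ has been overlooked outside of the two inputs already replaced; but the only place where ramification interacts non-trivially with $p$-adic Hodge theory is the block $\Lambda_{i,0}$ carrying the full weight $r_i$, which is handled by the same computation as in the unramified case already dealt with in Section 4. The proof therefore reduces to a direct reference to \cite[Thm. 2.4.1]{GLS15} with Proposition 4.1 in place of \cite[Prop. 4.7]{GLS14} and Lemma 2.1 in place of the field-theoretic input of \cite[Lem. 5.1.2]{Liu08}.
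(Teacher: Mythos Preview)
Your proposal is correct and follows essentially the same strategy as the paper: both argue that the proof of \cite[Thm.~2.4.1]{GLS15} goes through verbatim once the monodromy integrality input (stemming from \cite[Prop.~4.7/Cor.~4.11]{GLS14} and \cite[Lem.~5.1.2]{Liu08}) is supplied by Proposition~4.1 and Lemma~2.1. The paper is slightly more explicit about the intermediate checkpoints in \cite{GLS15}, namely that \cite[Thm.~2.3.5(2)]{GLS15} (via \cite[Cor.~4.11]{GLS14}) and \cite[Cor.~2.3.10]{GLS15} are the only places where $p\neq 2$ enters, whereas you describe the underlying mechanism directly; but the content is the same.
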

\begin{proof}
The discussion of \cite{GLS15} allows $p=2$ until Theorem 2.3.5(2). However, the only place the proof of \cite[Thm. 2.3.5(2)]{GLS15} uses $p\not=2$ is in an application of \cite[Cor. 4.11]{GLS14}, which now applies in our setting thanks to our work in the previous section. Thus Theorem 2.3.5(2) of \cite{GLS15} holds in our setting when $p=2$. It follows that \cite[Cor. 2.3.10]{GLS15} holds as well. Since these are the only inputs to the proof of \cite[Thm. 2.4.1]{GLS15} that required $p\neq 2$, the result follows.
\end{proof}

\begin{cor}
The Proposition 3.1.3 of \cite{GLS15} holds without the assumption that $p\neq 2$ (with suitably chosen uniformizer when $p=2$).
\end{cor}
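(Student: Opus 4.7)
The plan is to parallel the strategy of Theorem 5.1: examine the proof of \cite[Prop. 3.1.3]{GLS15} and verify that every appeal to a result previously restricted to $p\neq 2$ can be replaced by a $p=2$ analogue that we have already established with the uniformizer $\varpi$ of Lemma 2.1. The expectation, based on how cleanly the $p=2$ restriction was isolated in the proof of Theorem 5.1, is that the only substantive obstacle in \cite{GLS15} was the structure theorem \cite[Thm. 2.4.1]{GLS15} for Kisin modules attached to pseudo-BT representations and the auxiliary results \cite[Thm. 2.3.5(2)]{GLS15} and \cite[Cor. 2.3.10]{GLS15} feeding into it; all three have just been extended to $p=2$ in the proof of Theorem 5.1, ultimately resting on Corollary 4.7 and on Proposition 4.1.

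Concretely, I would trace the proof of \cite[Prop. 3.1.3]{GLS15} line by line and, at each appeal to a result that originally required $p\neq 2$, substitute the $p=2$ version established here: appeals to \cite[Thm. 2.4.1]{GLS15} are replaced by Theorem 5.1; appeals to \cite[Thm. 2.3.5(2)]{GLS15} or \cite[Cor. 2.3.10]{GLS15} are replaced by the $p=2$ versions noted in the proof of Theorem 5.1; and any use of \cite[Cor. 4.11]{GLS14} further upstream is handled by Corollary 4.7 of the present note. Since none of these substitutions changes the form of the conclusion fed into the proof, the argument of \cite[Prop. 3.1.3]{GLS15} should then go through verbatim.

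The main point to worry about is that the $p=2$ versions of the structural results carry extra exceptional cases, analogous to the $(r_0,\ldots,r_{f-1})=(2,\ldots,2)$ case that forced a small amendment in Theorem 4.3. The hard part of the verification is thus to check that these exceptional shapes are absorbed harmlessly into the case analysis of \cite[Prop. 3.1.3]{GLS15}. However, since that proposition is a formal consequence of the structure theorem and only uses the matrices $X_i$, $Y_i$, $Z_{i,j}$, $\Lambda_{i,j}$ through their general shape rather than the precise form of their entries, I expect these exceptional cases to be handled exactly as in our Theorem 4.3 and Proposition 4.5, without requiring any modification to the statement of the proposition. This yields the corollary.
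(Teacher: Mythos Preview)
Your proposal is correct and follows the same approach as the paper, which simply notes that the corollary is an immediate consequence of Theorem 5.1. Your concern about extra exceptional cases is unnecessary here: unlike Theorem 4.3, the statement of Theorem 5.1 is identical in form to \cite[Thm.~2.4.1]{GLS15} with no added exceptional clauses, so the deduction of \cite[Prop.~3.1.3]{GLS15} from it requires no modification whatsoever.
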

\begin{proof}
This is an immediate consequence of Theorem 5.1.
\end{proof}

If $V$ is a pesudo-BT representation of weight $\{r_i\}$ and $\lambda=\overline{\kappa}_i \in \text{Hom}(k,\overline{\mathbb{F}}_2),$ we write $r_{\lambda}:=r_i$. Let $k_2$ denote the unique quadratic extension of $k$ inside the residue field of $\overline{K}$. If $\lambda\in\text{Hom}(k_2, \overline{\mathbb{F}}_2)$, we write $r_{\lambda}:=r_{\lambda |_k}$.

\begin{thm}
Let $T$ be a lattice in pseudo-BT representation V of weight $\{r_i\}$, and assume that $\overline{T}=T\otimes_{\mathcal{O}_E} k_E$.

If $\overline{T}$ is reducible, then there is a subset $J\subseteq \text{Hom}(k,\overline{\mathbb{F}}_p)$ and integers $x_\lambda\in [0,e-1]$ such that
\[
\overline{T}|_{I_K}\simeq
\left(
\begin{array}{cc}
\prod_{\lambda\in J}\omega_{\lambda}^{r_\lambda +x_\lambda} \prod_{\lambda\not\in J}\omega_{\lambda}^{e-1-x_\lambda}& \ast\\
0 & \prod_{\lambda\not\in J}\omega_{\lambda}^{r_\lambda +x_\lambda} \prod_{\lambda\in J}\omega_{\lambda}^{e-1-x_\lambda}\\
\end{array}
\right)
\]

If $\overline{T}$ is absolutely irreducible, then there is a balanced subset $J\subseteq \text{Hom}(k_2,\overline{\mathbb{F}}_p)$ and integers  $x_\lambda\in [0,e-1]$ such that $x_\lambda$ depends only on $\lambda |_k$ and
\[
\overline{T}|_{I_K}\simeq
\left(
\begin{array}{cc}
\prod_{\lambda\in J}\omega_{\lambda}^{r_\lambda +x_\lambda} \prod_{\lambda\not\in J}\omega_{\lambda}^{e-1-x_\lambda}& 0\\
0 & \prod_{\lambda\not\in J}\omega_{\lambda}^{r_\lambda +x_\lambda} \prod_{\lambda\in J}\omega_{\lambda}^{e-1-x_\lambda}\\
\end{array}
\right)
\]
\end{thm}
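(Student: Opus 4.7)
The plan is to deduce Theorem 5.3 by running the proof of the corresponding classification theorem in \cite{GLS15} (essentially Theorem 3.2.1 of that paper) essentially verbatim, using our Theorem 5.1 and Corollary 5.2 as replacements for the $p>2$ inputs cited there. Concretely, I would begin by invoking Theorem 5.1 to fix an $\mathcal{O}_E[[u]]$-basis $\{\mathfrak{e}_i,\mathfrak{f}_i\}$ of $\mathfrak{M}_i$ in which $\varphi$ has the displayed standard form. Because $Y_i\equiv I_2\pmod{\mathfrak{m}_E}$, reduction modulo $\mathfrak{m}_E$ gives a Kisin module $\overline{\mathfrak{M}}$ for $\overline{T}|_{G_{K_\infty}}$ whose Frobenius still has the product form $\overline{X}_i\bigl(\prod_{j=1}^{e-1}\overline{\Lambda}_{i,e-j}\overline{Z}_{i,e-j}\bigr)\overline{\Lambda}_{i,0}$. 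The shapes of $\overline{\Lambda}_{i,j}$ and $\overline{\Lambda}_{i,0}$ make this reduction amenable to the combinatorial analysis of \cite{GLS15}.

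In the reducible case, $\overline{\mathfrak{M}}$ is an extension of two rank-one $\varphi$-modules, and I would extract from the standard form a subset $J\subseteq \Hom(k,\overline{\mathbb{F}}_p)$ indicating which embeddings place the weight $r_i$ on the sub versus the quotient, together with integers $x_\lambda\in[0,e-1]$ that record, at each embedding, how many of the $(u-\pi_{ij})$ factors end up attached to the sub after the permutations $\overline{Z}_{i,j}$ are accounted for. The formula for $\overline{T}|_{I_K}$ then follows from the standard rank-one calculation of inertia characters in terms of fundamental characters $\omega_\lambda$, exactly as in \cite[\S3]{GLS15}; the $\omega_\lambda^{r_\lambda+x_\lambda}$ on the sub and the $\omega_\lambda^{e-1-x_\lambda}$ on the quotient arise from the decomposition of the total $u$-exponent $r_\lambda + (e-1) = (r_\lambda + x_\lambda)+(e-1-x_\lambda)$.

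For the absolutely irreducible case, I would proceed by unramified quadratic base change: over $K\cdot K^{(2)}_0$ the representation $\overline{T}$ becomes reducible, and applying the previous paragraph yields a subset $J\subseteq \Hom(k_2,\overline{\mathbb{F}}_p)$ and integers $x_\lambda$. Equivariance under the residual Galois action that swaps the two extensions of each $\sigma\in\Hom(k,\overline{\mathbb{F}}_p)$ to $k_2$ forces $J$ to be balanced and forces $x_\lambda$ to depend only on $\lambda|_k$; descending back to $K$, the extension becomes split (i.e.\ the $\ast$ is $0$) because the two diagonal characters are now interchanged by Frobenius.

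The main obstacle, as ever in adapting \cite{GLS14,GLS15} to $p=2$, is verifying that every appeal back into the structure theory of \cite{GLS14,GLS15} survives the characteristic change; concretely this amounts to checking that only Theorem 2.3.5(2) and Proposition 3.1.3 of \cite{GLS15} are used non-trivially, both of which are now available by Theorem 5.1 and Corollary 5.2. The additional special case $(r_0,\ldots,r_{f-1})=(2,\ldots,2)$ identified in Theorem 4.3 does not affect the shape of $\overline{T}|_{I_K}$, only the extension class, so it does not perturb the statement we are proving.
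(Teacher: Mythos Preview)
Your proposal is correct and matches the paper's approach: the reducible case is handled by running the proof of \cite[Thm.~3.1.4]{GLS15} with Corollary~5.2 supplying the $p=2$ input, and the irreducible case follows the argument of \cite[Thm.~3.1.5]{GLS15} via unramified quadratic base change. The only difference is organizational: the paper invokes \cite[Thm.~3.1.5]{GLS15} for the irreducible case only when $e\geq 2$ and treats $e=1$ separately as a consequence of the unramified results in \S4, whereas you describe a uniform base-change argument covering both ranges of $e$.
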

\begin{proof}
If $\overline{T}$ is reducible, see the proof of \cite[Thm. 3.1.4]{GLS15}, which now goes through in our setting thanks to Corollary 5.2. In the irreducible case, the case $e\geq 2(=p)$ is handled exactly as in the proof of \cite[Thm. 3.1.5]{GLS15}. The case $e=1$ is a consequence of the results in $\S 4$.
\end{proof}

Except in the application of \cite[Thm. 2.4.1]{GLS15} to the proof of \cite[Thm. 5.1.5]{GLS15}, none of the arguments in Section 5 of \cite{GLS15} make any use of the hypothesis $p\neq 2$. Thus all of the results in that section hold when $p=2$ (and the uniformizer of $K$ is suitably chosen). Similarly, since we have checked that \cite[Cor. 5.10]{GLS14} still holds when $p=2$, the same is true of \cite[Lem. 6.12, Lem. 6.1.3, Prop. 6.1.7]{GLS15}. Finally, we have our main local theorem.
\begin{thm}
Suppose that $\overline{\rho}: G_K\rightarrow \GL_2(\overline{\mathbb{F}}_2)$ is a continuous representation. Then $W^{\mathrm{explicit}}(\overline{\rho})= W^{\mathrm{cris}}(\overline{\rho})$.
\end{thm}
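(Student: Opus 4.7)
The inclusion $W^{\mathrm{explicit}}(\overline{\rho})\subseteq W^{\mathrm{cris}}(\overline{\rho})$ is already recorded after Definition 3.5 as a consequence of the definitions and \cite[Lem. 4.1.19]{BLGG13}, and it holds for any $p$. So the content of the theorem is the reverse inclusion $W^{\mathrm{cris}}(\overline{\rho})\subseteq W^{\mathrm{explicit}}(\overline{\rho})$ when $p=2$, and the plan is to assemble the ingredients developed in Sections 4 and 5 in exactly the way Gee--Liu--Savitt do in the case $p>2$.

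Suppose $a\in W^{\mathrm{cris}}(\overline{\rho})$, so $\overline{\rho}$ admits a crystalline lift $\rho$ of Hodge type $\lambda$ for some lift $\lambda$ of $a$. After a suitable twist by a crystalline character one reduces to the case where the labeled Hodge--Tate gaps $\lambda_{\kappa,1}-\lambda_{\kappa,2}$ lie in $\{1,2\}$, so that $\rho$ is a pseudo-BT representation in the sense of \cite[Def. 2.3.1]{GLS15}. I would then split into the reducible and absolutely irreducible cases for $\overline{\rho}$. In the reducible case, apply Theorem 5.3 (reducible case) to $\rho$ to produce the subset $J\subseteq \Hom(k,\overline{\mathbb{F}}_2)$ and the integers $x_\lambda\in[0,e-1]$ that describe $\overline{\rho}|_{I_K}$. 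Setting $\delta_\sigma=x_\sigma$, this is precisely the shape of $\overline{\rho}|_{I_K}$ required by the reducible half of Definition 3.5, provided we also know that $\overline{\rho}$ admits an upper-triangular crystalline lift with the correct diagonal Hodge--Tate weights; this existence is provided by the generalizations of \cite[Lem.\ 6.1.2, Lem.\ 6.1.3, Prop.\ 6.1.7]{GLS15} that the paper notes go through verbatim at $p=2$, since their only inputs that needed $p\neq 2$ are \cite[Cor. 5.10]{GLS14} and \cite[Thm. 2.4.1]{GLS15}, both of which are now available in Section 4 (Corollary 4.7 and its inputs) and Section 5 (Theorem 5.1).

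In the absolutely irreducible case, apply the irreducible half of Theorem 5.3 to obtain a balanced subset $J\subseteq \Hom(k_2,\overline{\mathbb{F}}_2)$ and integers $x_\lambda\in[0,e-1]$ depending only on $\lambda|_k$ giving the tame inertial type of $\overline{\rho}$. Matching these exponents against the irreducible half of Definition 3.5 produces the required $\delta_\sigma$ and shows $a\in W^{\mathrm{explicit}}(\overline{\rho})$ directly, without any further lifting step: the weight is read off from $\overline{\rho}|_{I_K}$. The case $e=1$ was already handled (for unramified $K/\mathbb{Q}_2$) in Section 4, where the irreducible case was deduced from the reducible one following \cite[\S10]{GLS14}; this gives Corollary 4.7 and plugs the last gap.

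The genuine obstacle in this whole program was not Theorem 5.4 itself but the chain of inputs feeding into it: the structural theorem \cite[Thm. 4.22]{GLS14} (our Theorem 4.2) and its descendant \cite[Thm. 2.4.1]{GLS15} (our Theorem 5.1) at $p=2$, both of which rely on the estimate $U\in u^2 M_{d\times d}(\tilde S[\tfrac12]\cap S)$ from Proposition 4.1, which in turn depends on the main lemma (Lemma 2.1) that lets us choose a uniformizer $\varpi$ with $K_{2^\infty}\cap K_\infty=K$. Once these are in place, the proof of Theorem 5.4 is a routine packaging of Theorem 5.3, the lifting results of Section 6 of \cite{GLS15}, and the definitions, so I expect no new difficulty at this final step.
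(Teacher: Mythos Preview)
Your proposal is essentially correct in its identification of the ingredients and in its understanding that Theorem~5.4 is a routine packaging once Sections~4--5 are in place. However, your case organization and the citations in the reducible case do not quite match the paper's, and the mismatch is not purely cosmetic.

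The paper follows \cite{GLS15} and splits into three cases rather than two: (i) $\overline{\rho}$ semisimple, handled by Theorem~5.3 together with \cite[Thm.~4.1.6]{GLS15}; (ii) $\overline{\rho}$ reducible non-split and \emph{not} a twist of an extension of the trivial character by the cyclotomic character, handled by the $(\varphi,\hat G)$-module machinery of \cite[\S5]{GLS15} culminating in \cite[Thm.~5.4.1]{GLS15}; and (iii) $\overline{\rho}$ a twist of such a cyclotomic extension, handled by \cite[Thm.~6.1.8]{GLS15}. In your reducible case you invoke only \cite[Lem.~6.1.2, Lem.~6.1.3, Prop.~6.1.7]{GLS15} to produce the upper-triangular crystalline lift, but those results are specific to case~(iii); the generic reducible case~(ii) requires the separate Section~5 argument (via \cite[Thm.~5.1.5]{GLS15}), which the paper notes goes through at $p=2$ precisely because of Theorem~5.1 here. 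You in fact list \cite[Thm.~2.4.1]{GLS15} among the needed inputs, which is the input for Section~5, not Section~6; so you have the right pieces but have attributed them to the wrong lemmas. This is a citation-level imprecision rather than a mathematical gap: once one records that both \cite[\S5]{GLS15} and \cite[\S6]{GLS15} extend to $p=2$ (as the paper does in the paragraph preceding Theorem~5.4), your outline goes through.
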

\begin{proof}
If $\overline{\rho}$ is semisimple, thanks to our Theorem 5.3 the result follows as in the proof of \cite[Thm. 4.1.6]{GLS15}. If $\overline{\rho}$ is not a twist of an extension of the trivial character by the cyclotomic character, the result follows as in the proof of \cite[Thm. 5.4.1]{GLS15}. If $\overline{\rho}$ is a twist of an extension of the trivial character by the cyclotomic character, the result follows as in the proof of \cite[Thm. 6.1.8]{GLS15}.
\end{proof}

We conclude this section by checking that the set of Serre weights $W^{\mathrm{explicit}}(\overline{\rho})$ depends only on the restriction to inertia $\overline{\rho}|_{I_K}$ when $p=2$.
\begin{prop}
Assume that $K/\mathbb{Q}_2$ is a finite extension. Let $\overline{\rho}, \overline{\rho}':G_{K}\rightarrow \GL_2(\overline{\mathbb{F}}_2)$ be two continuous representations with $\overline{\rho}|_{I_{K}}\cong \overline{\rho}'|_{I_{K}}$. Then $W^{\mathrm{explicit}}(\overline{\rho})=W^{\mathrm{explicit}}(\overline{\rho}')$.
\end{prop}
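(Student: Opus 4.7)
The plan is to invoke Theorem 5.4, which gives $W^{\mathrm{explicit}}(\overline{\rho}) = W^{\mathrm{cris}}(\overline{\rho})$ and likewise for $\overline{\rho}'$, and then to combine this equality with the explicit formulas in Definition 3.4 to characterize membership in $W^{\mathrm{explicit}}$ purely in terms of $\overline{\rho}|_{I_K}$. In the reducible case of Definition 3.4, the ``in particular'' clause gives a necessary inertia condition on $\overline{\rho}|_{I_K}$, and the constructions used in the proofs of \cite[Thm. 5.4.1]{GLS15} and \cite[Thm. 6.1.8]{GLS15} --- now available at $p=2$ by our work in Section 5 --- produce a crystalline lift of the required form whenever that inertia condition holds, so the condition is also sufficient.

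With this reformulation in hand I would do a case analysis on $\overline{\rho}|_{I_K}$. If $\overline{\rho}|_{I_K}$ is absolutely irreducible, then both $\overline{\rho}$ and $\overline{\rho}'$ are necessarily irreducible as $G_K$-representations and the irreducible formula in Definition 3.4 refers only to $\overline{\rho}|_{I_K}$, so we are done at once. If $\overline{\rho}|_{I_K}$ is reducible and both $\overline{\rho}$ and $\overline{\rho}'$ are reducible (respectively, both irreducible) as $G_K$-representations, the same half of Definition 3.4 applies to each, and the equality follows directly from the inertia characterization above.

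The remaining ``mixed'' case is when exactly one of $\overline{\rho}, \overline{\rho}'$ is globally irreducible --- hence induced from the unramified quadratic extension $K'/K$ with residue field $k'$ --- while the other is globally reducible, both having isomorphic inertia restrictions. Here I would directly compare the two superficially different formulas of Definition 3.4 for the common inertia representation, using that $\omega_\sigma|_{I_K}$ for $\sigma \in \Hom(k', \overline{\mathbb{F}}_2)$ is determined by $\sigma|_k$, and that the non-trivial element of $\Gal(k'/k)$ swaps the two elements of $\Hom(k', \overline{\mathbb{F}}_2)$ extending each $\sigma|_k$. A bookkeeping argument then matches choices of $J \subset \Hom(k', \overline{\mathbb{F}}_2)$ in the irreducible formula with choices of $J \subset \Hom(k, \overline{\mathbb{F}}_2)$ and integers $\delta_\sigma$ in the reducible formula, showing that the two prescriptions yield the same set of Serre weights.

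The main obstacle I anticipate is exactly this mixed sub-case: reconciling the two apparently different inertia formulas in Definition 3.4 requires a careful but finite combinatorial check. All other cases are essentially immediate from Theorem 5.4 and the definitions, with no new input beyond the work of the previous sections.
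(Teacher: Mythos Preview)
The paper's proof simply observes that the argument of \cite[Prop.~6.3.1]{GLS15} carries over verbatim once the inputs from Sections~4 and~5 are available at $p=2$; your proposal is a plausible reconstruction of that argument, and your non-mixed cases are handled correctly. However, the mixed case contains a genuine error. You assert that ``$\omega_\sigma|_{I_K}$ for $\sigma \in \Hom(k', \overline{\mathbb{F}}_2)$ is determined by $\sigma|_k$'', but this is false: since $K'/K$ is unramified one has $I_{K'}=I_K$, so restriction to $I_K$ does nothing, and the two level-$2f$ fundamental characters $\omega_{\tilde\sigma_1},\omega_{\tilde\sigma_2}$ extending a given $\sigma\in\Hom(k,\overline{\mathbb{F}}_2)$ are distinct (related by $\omega_{\tilde\sigma_2}=\omega_{\tilde\sigma_1}^{p^f}$, with product $\omega_{\tilde\sigma_1}\omega_{\tilde\sigma_2}=\omega_\sigma$ the level-$f$ character). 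Consequently the bookkeeping you sketch---matching a balanced $J\subset\Hom(k',\overline{\mathbb{F}}_2)$ directly against a $J\subset\Hom(k,\overline{\mathbb{F}}_2)$ with parameters $\delta_\sigma$---cannot proceed on that premise. The correct identification uses instead the relation $\omega_{\tilde\sigma_1}\omega_{\tilde\sigma_2}=\omega_\sigma$ together with the hypothesis that the two diagonal entries in the irreducible formula actually take values in the level-$f$ character group (which is exactly what reducibility of $\overline{\rho}|_{I_K}$ means).

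There is also a smaller gap in your reducible case: you claim the inertial condition in the ``in particular'' clause of Definition~3.4 is \emph{sufficient} for membership in $W^{\mathrm{explicit}}(\overline{\rho})$, citing \cite[Thm.~5.4.1, Thm.~6.1.8]{GLS15}. Those theorems, however, start from an arbitrary crystalline lift and produce an upper-triangular one; they do not produce a crystalline lift from the bare inertial shape. What is actually needed is (i) the existence of crystalline characters $\psi_1,\psi_2$ with the prescribed Hodge--Tate weights lifting $\overline{\psi}_1,\overline{\psi}_2$ (this does follow from the inertial formula, since crystalline characters with fixed Hodge--Tate weights are determined up to unramified twist), and (ii) surjectivity of the relevant crystalline $\mathrm{Ext}^1$ onto the residual one. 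Step~(ii) is where the special case $\overline{\psi}_1\overline{\psi}_2^{-1}=\overline{\varepsilon}$ (equivalently $\overline{\psi}_1=\overline{\psi}_2$, since $\overline{\varepsilon}=1$ when $p=2$) must be treated separately, and is precisely the content of \cite[\S6]{GLS15} rather than of the theorems you cite.
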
  
\begin{proof}
Thanks to our results in Section 4 and 5, the proof of \cite[Prop. 6.3.1]{GLS15} carries over to the case $p=2$.
\end{proof}
\section{Global results}
In this section, we prove the main global result in this note. Let $F$ be an imaginary CM field with maximal totally real subfield $F^{+}$. For each place $w$ of $F$ above $p$, let $k_{w}$ denote the residue field of $F_{w}$. If $w$ lies over a place $v$ of $F^{+}$, we write $v=ww^{c}$. Write $S:=\coprod_{w|p}\Hom(k_w, \overline{\mathbb{F}}_p)$, and let $(\mathbb{Z}_{+}^{2})_{0}^{S}$ denote the subset of $(\mathbb{Z}_{+}^{2})^{S}$ consisting of elements $a$ such that for each $w|p$, if $\sigma\in \Hom(k_w, \overline{\mathbb{F}}_{p})$ then
\[
a_{\sigma ,1}+a_{\sigma c ,2}=0.
\]
If $a\in (\mathbb{Z}_{+}^{2})^{S}$ and $w|p$ is a place of $F$, then let $a_w$ denote the element $(a_{\sigma})_{\sigma\in \Hom(k_{w}, \overline{\mathbb{F}}_p)}$ of $(\mathbb{Z}_{+}^{2})^{\Hom(k_w , \overline{\mathbb{F}}_p)}$.

Let $\overline{r}:G_F\rightarrow \GL_2(\overline{\mathbb{F}}_p)$ be a continuous irreducible representation.
\begin{defn}
We define $W^{\mathrm{explicit}}(\overline{r})$ (resp. $W^{\mathrm{cris}}(\overline{r})$) to be the set of elements $a\in (\mathbb{Z}_{+}^{2})_{0}^{S}$ such that, for each $w$ above $p$, $a_w\in W^{\mathrm{explicit}}(\overline{r}|_{G_{F_{w}}})$ (resp. $W^{\mathrm{cris}}(\overline{r}|_{G_{F_{w}}})$).
\end{defn}
We now establish weight elimination in the weight part of Serre's conjecture for rank two unitary groups when $p=2$. In particular, we follow \cite[Def. 2.1.9]{BLGG13} for the definition of what it means for $\overline{r}$ to be modular, and more precisely for $\overline{r}$ to be modular of some Serre weight $a$. Now we can state the main result of this note.
\begin{thm}
Let $F$ be an imaginary CM field with maximal totally real subfield $F^{+}$. Assume that $F/F^{+}$ is unramified at all finite places, that every place of $F^{+}$ above $2$ splits in $F$, and that $[F^{+}:\mathbb{Q}]$ is even. Suppose that $\overline{r}:G_{F}\rightarrow \GL_{2}(\overline{\mathbb{F}}_{2})$ is an irreducible modular representation with split ramification that is modular in the sense of \cite[Def. 2.1.9]{BLGG13}. 

Let $a$ be a Serre weight. If $\overline{r}$ is modular of weight $a$, then $a\in W^{\mathrm{explicit}}(\overline{r})$.  
\end{thm}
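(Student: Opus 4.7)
The plan is to reduce the global statement to the local inclusion $W^{\mathrm{cris}}(\overline{\rho})\subseteq W^{\mathrm{explicit}}(\overline{\rho})$ for each place above $2$, which is now available at $p=2$ by Theorem 5.4 of this note. Once that reduction is in hand, the theorem is immediate from the definitions in Section 6, so the entire content is in showing that being modular of weight $a$ forces $a_w\in W^{\mathrm{cris}}(\overline{r}|_{G_{F_w}})$ for every place $w\mid 2$.

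First I would unpack \cite[Def.~2.1.9]{BLGG13}: saying that $\overline{r}$ is modular of weight $a$ produces an automorphic representation $\pi$ of a definite unitary group over $F^{+}$ (which exists because $[F^{+}:\Q]$ is even and the relevant places split in $F$), together with a Galois representation $r:G_{F}\rightarrow \GL_{2}(\overline{\Q}_{2})$ attached to $\pi$ whose reduction is $\overline{r}$. The weight condition on $\pi$ translates, via the classical local-global compatibility at places above $2$, to the fact that $r|_{G_{F_w}}$ is potentially semistable with Hodge--Tate weights controlled by $a_w$; since $\pi$ is chosen to correspond to the weight $a$ Serre weight (and not to some companion form at a higher weight), one verifies as in \cite[\S 4]{BLGG13} that $r|_{G_{F_w}}$ is in fact crystalline of some Hodge type $\lambda$ lifting $a_w$. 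This is exactly the statement $a_w\in W^{\mathrm{cris}}(\overline{r}|_{G_{F_w}})$, i.e., $a\in W^{\mathrm{cris}}(\overline{r})$ in the notation of Definition 6.1.

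Next I would apply Theorem 5.4 place by place: for each $w\mid 2$, the local representation $\overline{r}|_{G_{F_w}}$ sits over a finite extension $F_{w}/\Q_{2}$ (this is where we crucially use that every place of $F^{+}$ above $2$ splits in $F$, so the local picture really is two-dimensional over $\Q_{2}$), and Theorem 5.4 gives
\[
W^{\mathrm{cris}}(\overline{r}|_{G_{F_w}})=W^{\mathrm{explicit}}(\overline{r}|_{G_{F_w}}).
\]
Combining this inclusion over all $w\mid 2$ with the definitions of $W^{\mathrm{cris}}(\overline{r})$ and $W^{\mathrm{explicit}}(\overline{r})$ in Section 6 yields $a\in W^{\mathrm{explicit}}(\overline{r})$, as desired.

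The expected main obstacle is the first step, namely checking that the local-global compatibility argument producing the crystalline lift of the correct Hodge type at $p=2$ does not itself require $p\neq 2$. For definite unitary groups the input here is the compatibility proved in \cite{BLGG13} (together with results of Caraiani and others cited there), and one must verify that each such input is either insensitive to the residue characteristic or has been extended to $p=2$. In our setting, the only genuinely new ingredient beyond what \cite{GLS14,GLS15} used is precisely the local inclusion, which has just been proved; the global machinery in \cite{BLGG13} producing crystalline Galois representations from automorphic forms of regular weight applies uniformly in $p$. Once this is verified, the proof is complete.
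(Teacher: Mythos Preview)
Your approach is essentially the same as the paper's: reduce to the inclusion $W^{\mathrm{cris}}\subseteq W^{\mathrm{explicit}}$ at each place above $2$ via the global machinery of \cite{BLGG13}, and then invoke Theorem~5.4. The paper is just more precise at the point you flag as the ``expected main obstacle'': rather than appealing vaguely to local-global compatibility, it cites \cite[Thm.~4.1.3]{BLGG13} and \cite[Lem.~2.1.11]{BLGG13} as the exact inputs, observes that the hypothesis $p>2$ there is only a blanket assumption from the start of \S2 of that paper, and points to \cite[\S4]{Tho17} to verify that the proof of \cite[Lem.~2.1.11]{BLGG13} goes through when $p=2$.
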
 
\begin{proof}
This theorem is a consequence of \cite[Thm. 4.1.3]{BLGG13} (cf. also \cite[Lem. 2.1.11]{BLGG13}) by our Theorem 5.4 and Definition 6.1. Note that in Theorem 4.1.3 of \cite{BLGG13}, the hypothesis $p>2$ is required beacuse they assume the same condition at the beginning of Section 2. But one can check that everything is right in the proof of \cite[Lem. 2.1.11]{BLGG13} when $p=2$ (see e.g. Section 4 of \cite{Tho17}).
\end{proof}

\begin{remark}
Let $W(\overline{r})$ be the set of Serre weights in which $\overline{r}$ is modular. The proof of the weight part of Serre's conjecture for rank two unitary groups in \cite{GLS14, GLS15, BLGG13} proceeds by proving the chain of inclusions
\[
W^{\mathrm{explicit}}(\overline{r})\subseteq W(\overline{r})\subseteq W^{\mathrm{cris}}(\overline{r})\subseteq W^{\mathrm{explicit}}(\overline{r}).
\]
This note verified this chain when $p=2$ except the first inclusion. When $p>2$, the first inclusion is one of the main results of \cite{BLGG13}. Their proof uses a `change of weight' automorphy lifting theorem for $p$-adic Galois representations developed in \cite{BLGGT14}. However, such a theorem is not presently available when $p=2$.  
\end{remark}

\bigskip

\vskip 0.5 cm


\vskip 0.5 cm



\Addresses


\end{document}